\documentclass[a4paper, 11pt]{article}

\usepackage{mathtools, amssymb, amsthm} % AMS series
\usepackage[top=30truemm,bottom=30truemm,left=30truemm,right=30truemm]{geometry}
\usepackage{bm}
\usepackage{enumitem}
\usepackage{xcolor}

\usepackage[
	unicode=true,
	bookmarks=true,
	bookmarkstype=toc,
	bookmarksnumbered=true,
%	colorlinks=true
	hidelinks % printed version
	]{hyperref}

\usepackage{tikz}
\usetikzlibrary{intersections, calc, arrows.meta}
\usepackage{subcaption}

\DeclareMathOperator{\AC}{AC}
\DeclareMathOperator{\dom}{dom}
\DeclareMathOperator{\lip}{lip}

\DeclareMathOperator{\Var}{Var}

\newcommand{\dd}{\mathrm{d}}

\newcommand{\ep}{\varepsilon}

\newcommand{\argdot}{\mspace{1.5mu} \cdot \mspace{1.5mu}}

\DeclarePairedDelimiter{\abs}{\lvert}{\rvert}
\DeclarePairedDelimiter{\norm*}{\lVert}{\rVert}

\DeclarePairedDelimiter{\rbra}{\lparen}{\rparen}
\DeclarePairedDelimiter{\sbra}{\lbrack}{\rbrack}

\DeclarePairedDelimiterX{\Set}[2]{\lbrace}{\rbrace}{\mspace{1mu} #1 : #2 \mspace{1mu}}

% Theorem environment
\theoremstyle{plain}
\newtheorem{theorem}{Theorem}[section]

\newtheorem{lemma}[theorem]{Lemma}
\newtheorem{proposition}[theorem]{Proposition}

\theoremstyle{definition}
\newtheorem{definition}[theorem]{Definition}

\newtheorem{notation}{Notation}

\theoremstyle{remark}
\newtheorem{remark}[theorem]{Remark}

\numberwithin{equation}{section}

\begin{document}

\title{On regularity of mild solutions for autonomous linear retarded functional differential equations}
\author{Junya Nishiguchi\thanks{Mathematical Science Group, Advanced Institute for Materials Research (AIMR), Tohoku University, Katahira 2-1-1, Aoba-ku, Sendai, 980-8577, Japan}
\footnote{E-mail: junya.nishiguchi.b1@tohoku.ac.jp, ORCID: \url{https://orcid.org/0000-0002-0326-2845}}}
\date{}

\maketitle

\begin{abstract}
The notion of mild solutions for autonomous linear retarded functional differential equations (RFDEs) has been introduced in [J. Nishiguchi, Electron.\ J. Qual.\ Theory Differ.\ Equ.\ \textbf{2023}, No.~32, 1--77] for the purpose of defining fundamental matrix solutions and obtaining a variation of constants formula for the RFDEs.
This notion gives a straightforward definition of solutions to the RFDEs under discontinuous history functions compared with previous studies in the literature.
For a given autonomous linear RFDE, it holds that the fundamental matrix solutions are locally Lipschitz continuous on the interval $[0, \infty)$.
However, it is not apparent whether a similar property is true for the mild solutions.
Here we obtain a result which shows the regularity of mild solutions on $[0, \infty)$ for autonomous linear RFDEs.
The result makes clear a connection between the mild solutions and solution concepts in previous studies.

\begin{flushleft}
\textbf{2020 Mathematics Subject Classification}.
Primary 34K05, 34K06;
Secondary 34K99, 26A42.

\end{flushleft}

\begin{flushleft}
\textbf{Keywords}.
Retarded functional differential equations; mild solutions; discontinuous history functions; regularity; Riemann--Stieltjes integrals.
\end{flushleft}

\end{abstract}

%\tableofcontents

\section{Introduction}

A \textit{delay differential equation (DDE)} is a differential equation where the time derivative of the unknown function $x$ depends on the past information of $x$.
When such a past dependence in a DDE is expressed as the dependence of $\Dot{x}(t)$ on $x|_{\sbra*{t - r, t}}$ with a given constant $r > 0$, the dynamics concept of the DDE can be understood as the time evolution of $x_t$.
Here $x_t \colon \sbra*{-r, 0} \to \mathbb{K}^n$ is a function defined by
\begin{equation*}
	x_t(\theta) \coloneqq x\rbra*{t + \theta}
	\mspace{25mu}
	(\theta \in \sbra*{-r, 0}),
\end{equation*}
which is called the \textit{history segment} of $x$ at $t$.
Throughout this paper, let $n \ge 1$ be an integer, $\mathbb{K} = \mathbb{R}$ or $\mathbb{C}$, and $r > 0$ be a constant.
Since $x_t$ is a function on the interval $\sbra*{-r, 0}$, we need to choose a function space in which the history segment $x_t$ lives in order to consider the time evolution of $x_t$.
We call such a function space a \textit{history function space} of the DDE.

As a history function space, it is usual to choose the Banach space $C \coloneqq C\rbra*{\sbra*{-r, 0}, \mathbb{K}^n}$ of continuous functions from $\sbra*{-r, 0}$ to $\mathbb{K}^n$ with the supremum norm given by
\begin{equation*}
	\norm*{\phi}
	\coloneqq \sup_{\theta \in \sbra*{-r, 0}} \abs*{\phi(\theta)}
	\mspace{25mu}
	(\text{$\abs*{\argdot}$ is a fixed norm on $\mathbb{K}^n$})
\end{equation*}
for $\phi \in C$.
Viewing the dynamics of DDEs as the time evolution of the history segment in the Banach space $C$ goes back to Krasovskii~\cite{Krasovskii_1963}.
See also \cite{Hale_1963_stability}, \cite{Hale_2006_history} as reviews of this matter.

With this point of view, DDEs are formulated as \textit{retarded functional differential equations (RFDEs)}, where a linear RFDE has the form
\begin{equation}\label{eq: linear RFDE}
	\Dot{x}(t) = Lx_t
	\mspace{25mu}
	(t \ge 0)
\end{equation}
for a given continuous linear map $L \colon C \to \mathbb{K}^n$.
The usual notion of solutions to the linear RFDE~\eqref{eq: linear RFDE} is defined under an initial condition $x_0 = \phi \in C$, where the function $\phi$ is called the \textit{initial history function}.
Then a continuous function $x \colon [-r, \infty) \to \mathbb{K}^n$ is a \textit{solution} of \eqref{eq: linear RFDE} under the initial condition $x_0 = \phi \in C$ if and only if it satisfies both the initial condition and an integral equation
\begin{equation}\label{eq: integral eq for linear RFDE}
	x(t) = \phi(0) + \int_0^t Lx_s \mspace{2mu} \dd s
	\mspace{25mu}
	(t \ge 0).
\end{equation}
We refer the reader to \cite{Hale_1977}, \cite{Hale--VerduynLunel_1993} as general references of the theory of RFDEs.

The above solution concept does not allow us to choose an initial history function from discontinuous functions.
Such a need can be seen in various perspectives, e.g., in the problem of the choice of a history function space as a Hilbert space, and in the problem of  a definition of the fundamental matrix solution of linear RFDEs.
For the former problem, there have been extensive researches from the mid 1970s, represented by e.g., Delfour and Mitter~\cite{Delfour--Mitter_1972_hereditary}, Bernier and Manitius~\cite{Bernier--Manitius_1978}, Delfour~\cite{Delfour_1980}, Delfour and Manitius~\cite{Delfour--Manitius_1980}.
See also the references therein.
However, the understanding of the solution concepts in the above references are not straightforward.

Compared with these, there is a straightforward way to accomplish a solution concept under discontinuous history functions.
This is the notion of \textit{mild solutions} introduced in \cite{Nishiguchi_2023}, which is obtained by replacing the integral $\int_0^t Lx_s \mspace{2mu} \dd s$ in \eqref{eq: integral eq for linear RFDE} by $L\int_0^t x_s \mspace{2mu} \dd s$.
Here for each $x \in \mathcal{L}^1_\mathrm{loc}\rbra*{[-r, \infty), \mathbb{K}^n}$, $\int_0^t x_s \mspace{2mu} \dd s \in C$ is defined by
\begin{equation*}
	\rbra*{ \int_0^t x_s \mspace{2mu} \dd s }(\theta)
	\coloneqq \int_0^t x\rbra*{s + \theta} \mspace{2mu} \dd s
	\mspace{25mu}
	(\theta \in \sbra*{-r, 0}).
\end{equation*}
In this paper, for each interval $J \subset \mathbb{R}$ and each $p \in [1, \infty)$, let $\mathcal{L}^p_\mathrm{loc}\rbra*{J, \mathbb{K}^n}$ denote the set of all locally Lebesgue $p$-integrable functions from $J$ to $\mathbb{K}^n$ defined almost everywhere.
Furthermore, let $\mathcal{L}^\infty_\mathrm{loc}\rbra*{J, \mathbb{K}^n}$ denote the set of all locally essentially bounded functions from $J$ to $\mathbb{K}^n$ defined almost everywhere.

The purpose of this paper is to reveal a connection between the solution concepts in \cite{Delfour--Mitter_1972_hereditary}, \cite{Bernier--Manitius_1978}, \cite{Delfour_1980}, \cite{Delfour--Manitius_1980} and the notion of mild solutions in \cite{Nishiguchi_2023} by showing a regularity property of mild solutions to the linear RFDE~\eqref{eq: linear RFDE}.
We now state the precise definition of mild solutions given in \cite[Definition~2.5]{Nishiguchi_2023}.
We will use the following notation.

\begin{notation}[cf.\ \cite{Delfour--Mitter_1972_hereditary}]
For each $p \in \sbra*{1, \infty}$, let
\begin{equation*}
	\mathcal{M}^p\rbra*{\sbra*{-r, 0}, \mathbb{K}^n}
	\coloneqq \Set*{\phi \in \mathcal{L}^p\rbra*{\sbra*{-r, 0}, \mathbb{K}^n}}{\text{$\phi$ is defined at $0$}}.
\end{equation*}
$\mathcal{M}^p\rbra*{\sbra*{-r, 0}, \mathbb{K}^n}$ will be abbreviated as $\mathcal{M}^p$.
\end{notation}

\begin{definition}[\cite{Nishiguchi_2023}]\label{dfn: mild sol}
Let $\phi \in \mathcal{M}^1$ be given.
We call a function $x \in \mathcal{L}^1_\mathrm{loc}\rbra*{[-r, \infty), \mathbb{K}^n}$ a \textit{mild solution} of the linear RFDE~\eqref{eq: linear RFDE} under the initial condition $x_0 = \phi$ if (i) $x$ is defined on $[0, \infty)$, and (ii) a system of equations
\begin{equation}\label{eq: mild sol}
	\left\{
	\begin{alignedat}{2}
		x(t) &= \phi(0) + L\int_0^t x_s \mspace{2mu} \dd s
		&\mspace{25mu}&
		(t \ge 0), \\
		x(t) &= \phi(t)
		& &
		(\text{a.e.\ $t \in \sbra*{-r, 0}$})
	\end{alignedat}
	\right.
\end{equation}
is satisfied.
\end{definition}

We note that the property that $\phi$ is defined at $0$ is necessary to give the initial value $\phi(0)$ in Definition~\ref{dfn: mild sol}.
Based on this definition, one can show that a mild solution of \eqref{eq: linear RFDE} under an initial condition $x_0 = \phi \in \mathcal{M}^1$ coincides with the usual solution if $\phi \in C$.
We also have the existence and uniqueness of a mild solution of \eqref{eq: linear RFDE} under each initial condition $x_0 = \phi \in \mathcal{M}^1$, where the unique mild solution is denoted by
\begin{equation*}
	x\rbra*{\argdot; \phi} \colon \dom(\phi) \cup [0, \infty) \to \mathbb{K}^n.
\end{equation*}
See \cite[Subsections~2.1, 2.2, and 2.3]{Nishiguchi_2023} for the details.
It holds the mild solution $x\rbra*{\argdot; \phi}$ of \eqref{eq: linear RFDE} under $x_0 = \phi \in \mathcal{M}^1$ is continuous on $[0, \infty)$ because the function
\begin{equation}\label{eq: t mapsto int_0^t x_s ds in C}
	[0, \infty) \ni t \mapsto \int_0^t x_s \mspace{2mu} \dd s \in C
\end{equation}
is continuous for any $x \in \mathcal{L}^1_\mathrm{loc}\rbra*{[-r, \infty), \mathbb{K}^n}$ (see \cite[Lemma~2.8]{Nishiguchi_2023}), which implies that the function
\begin{equation}\label{eq: t mapsto L int_0^t x_s ds}
	[0, \infty) \ni t \mapsto L\int_0^t x_s \mspace{2mu} \dd s \in \mathbb{K}^n
\end{equation}
is also continuous because of the continuity of $L$.

For a special class of initial history functions, we have a more nicer regularity property of mild solutions.
This class is given by the set of instantaneous inputs of vectors in $\mathbb{K}^n$.
Here an \textit{instantaneous input} $\Hat{\xi} \colon \sbra*{-r, 0} \to \mathbb{K}^n$ of a vector $\xi \in \mathbb{K}^n$ is defined by
\begin{equation*}
	\Hat{\xi}(\theta) \coloneqq
	\begin{cases}
		0 & (\theta \in [-r, 0)) \\
		\xi & (\theta = 0).
	\end{cases}
\end{equation*}
For any $\xi \in \mathbb{K}^n$, the instantaneous input $\Hat{\xi}$ belongs to $\mathcal{M}^1$.
Therefore, one can consider the mild solution $x\rbra[\big]{\argdot; \Hat{\xi}} \colon [-r, \infty) \to \mathbb{K}^n$ of \eqref{eq: linear RFDE} under $x_0 = \Hat{\xi}$.

For this class of mild solutions, the following result holds (cf.\ \cite[Theorems~3.10 and 3.5]{Nishiguchi_2023}).

\begin{theorem}[cf.\ \cite{Nishiguchi_2023}]\label{thm: regularity and DE of x(argdot; Hat{xi})}
For each $\xi \in \mathbb{K}^n$, the mild solution $x\rbra[\big]{\argdot; \Hat{\xi}}$ of \eqref{eq: linear RFDE} under $x_0 = \Hat{\xi}$ is locally Lipschitz continuous and differentiable almost everywhere on $[0, \infty)$.
Furthermore, $x \coloneqq x\rbra[\big]{\argdot; \Hat{\xi}}$ satisfies a differential equation
\begin{equation}\label{eq: DE of x(argdot; Hat{xi})}
	\Dot{x}(t) = \int_{-t}^0 \dd \eta(\theta) \mspace{2mu} x\rbra*{t + \theta}
\end{equation}
for almost all $t \ge 0$.
\end{theorem}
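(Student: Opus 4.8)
The plan is to turn the defining integral equation into one that can be differentiated, after fixing a good representative of the solution and invoking the integral representation of $L$. By the Riesz representation theorem we may write $L\phi = \int_{-r}^0 \dd\eta(\theta)\,\phi(\theta)$ for the normalized function $\eta$ of bounded variation appearing in the statement, and we let $\mu_\eta$ denote the associated ($\mathbb{K}^{n \times n}$-valued) Lebesgue--Stieltjes measure on $\sbra*{-r,0}$. Fix the representative of $x \coloneqq x\rbra[\big]{\argdot; \Hat{\xi}}$ that vanishes identically on $[-r,0)$ and is continuous on $[0,\infty)$; this is legitimate since $x = \Hat{\xi} = 0$ a.e.\ on $[-r,0)$ and $x$ is continuous on $[0,\infty)$, as noted after Definition~\ref{dfn: mild sol}. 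With $X(t) \coloneqq \int_0^t x(u)\,\dd u$ (so $X \equiv 0$ on $(-\infty,0]$), the substitution $u = s + \theta$ gives $\rbra*{\int_0^t x_s\,\dd s}(\theta) = \int_0^t x(s+\theta)\,\dd s = X(t+\theta)$, so the first line of \eqref{eq: mild sol} with $\phi = \Hat{\xi}$ reads
\[
	x(t) = \xi + \int_{-r}^0 \dd\eta(\theta)\, X(t+\theta) \qquad (t \ge 0).
\]
This is the identity I will work with.

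For the first two assertions the representation of $L$ is not even needed. On any interval $[0,T]$ the continuous function $x$ is bounded, say $\abs{x} \le M_T$, and since $x$ vanishes on $[-r,0)$ we have $\abs{x} \le M_T$ a.e.\ on $\sbra*{-r,T}$. For $0 \le t_1 \le t_2 \le T$ the element $\int_0^{t_2} x_s\,\dd s - \int_0^{t_1} x_s\,\dd s \in C$ takes at each $\theta$ the value $\int_{t_1}^{t_2} x(s+\theta)\,\dd s$, of modulus at most $M_T(t_2-t_1)$; hence its norm in $C$ is at most $M_T(t_2-t_1)$, and boundedness of $L$ yields $\abs{x(t_2)-x(t_1)} \le \lVert L\rVert\, M_T\,(t_2-t_1)$. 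Thus $x$ is locally Lipschitz continuous on $[0,\infty)$, which is the first assertion; being locally Lipschitz, $x$ is locally absolutely continuous and therefore differentiable almost everywhere on $[0,\infty)$, which is the second.

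It remains to identify $\Dot{x}$, and here the representation of $L$ enters. Returning to $x(t) = \xi + \int_{-r}^0 \dd\eta(\theta)\int_0^t x(s+\theta)\,\dd s$ and regarding the double integral against the product measure $\mu_\eta \times \lambda$ on $\sbra*{-r,0} \times \sbra*{0,t}$, I interchange the order of integration to obtain $x(t) = \xi + \int_0^t h(s)\,\dd s$, where $h(s) \coloneqq \int_{-r}^0 \dd\eta(\theta)\, x(s+\theta)$ is understood via $\mu_\eta$. Since $h$ is locally bounded, the fundamental theorem of calculus for absolutely continuous functions gives $\Dot{x}(t) = h(t)$ for almost all $t \ge 0$. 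Finally, because $x$ vanishes on $[-r,0)$ and $\mu_\eta$ has at most countably many atoms, for almost every $t$ the measure $\mu_\eta$ has no atom at $-t$ and $\theta \mapsto x(t+\theta)$ is continuous on $\sbra*{-t,0}$, so $h(t) = \int_{-t}^0 \dd\eta(\theta)\, x(t+\theta)$, which is \eqref{eq: DE of x(argdot; Hat{xi})}. The one genuinely delicate point is this Fubini interchange: since $\mu_\eta$ may be singular or atomic while $x$ is a priori only an $\mathcal{L}^1$-class, one must commit to the everywhere-defined representative before evaluating $x(s+\theta)$ pointwise, check joint measurability against $\mu_\eta \times \lambda$ (the fixed representative is Borel, being continuous off the single point $0$), and verify that the jump of $x$ at $0$ --- inherited by $\theta \mapsto x(t+\theta)$ at $\theta = -t$ --- is invisible to $\mu_\eta$ for almost every $t$. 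The integrability needed for Fubini and the remaining bounds are the routine total-variation estimates $\abs{\int \dd\eta\, f} \le \Var(\eta)\sup\abs{f}$ already used above.
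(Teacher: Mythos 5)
Your argument is correct, and it covers all three assertions, but the route to the differential equation is genuinely different from the one the paper relies on. For the local Lipschitz continuity you argue exactly as the paper does in Proposition~\ref{prop: loc Lip of t mapsto int_0^t x_s ds in C}: since $\Hat{\xi}$ vanishes a.e.\ and $x$ is continuous on $[0, \infty)$, the solution lies in $\mathcal{L}^\infty_\mathrm{loc}$, and boundedness of $L$ gives the Lipschitz bound; this part is the same. For the identification of $\Dot{x}$, however, the paper (following \cite{Nishiguchi_2023}) stays entirely within Riemann--Stieltjes calculus: it writes the equation as $x = \xi + \dd \Check{\eta} * \mathcal{V}x$ (the term $G\rbra[\big]{\argdot; \Hat{\xi}}$ vanishes because $\Hat{\xi} = 0$ a.e.), invokes the convolution identity $\dd \Check{\eta} * \mathcal{V}x = \mathcal{V}\rbra*{\dd \Check{\eta} * x}$ of \cite[Theorem~3.7]{Nishiguchi_2023}, and differentiates; the jump of $x$ at $0$ is absorbed into the structural fact that $\dd \Check{\eta} * x$ splits into a continuous part plus a locally BV part. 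You instead pass to the Lebesgue--Stieltjes measure $\mu_\eta$, fix a Borel representative, and use classical Fubini for $\mu_\eta \times \lambda$, then discard the (at most countably many) $t$ for which $-t$ is an atom. What your route buys is independence from the convolution machinery of \cite{Nishiguchi_2023} at the cost of the representative-fixing and atom-counting bookkeeping you rightly flag; what the paper's route buys is that one never leaves the class of Riemann--Stieltjes integrals of continuous integrands, so no choice of representative or measure-theoretic identification is needed, and the same computation generalizes directly to arbitrary $\phi \in \mathcal{M}^1$ (where the extra forcing term $G\rbra*{\argdot; \phi}$ appears), which is how Theorem~\ref{thm: DE satisfied by mild sol} is proved. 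One small point of care in your version: the equality of your Lebesgue--Stieltjes expression $h(t)$ with the Riemann--Stieltjes integral $\int_{-t}^0 \dd \eta(\theta) \mspace{2mu} x\rbra*{t + \theta}$ over the closed interval $\sbra*{-t, 0}$ requires $\eta$ to be continuous at $-t$, not merely that $\mu_\eta$ has no atom there in one particular normalization; since $\eta$ has at most countably many discontinuities this is still an a.e.\ statement and your conclusion stands, but it is worth phrasing the exceptional set in terms of discontinuities of $\eta$.
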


For the statement, $\eta \colon (-\infty, 0] \to M_n\rbra*{\mathbb{K}}$ is a matrix-valued function with the properties that $\eta$ is of bounded variation on $\sbra*{-r, 0}$, $\eta$ is constant on $(-\infty, -r]$, and
\begin{equation}\label{eq: expression of L}
	L\phi = \int_{-r}^0 \dd \eta(\theta) \mspace{2mu} \phi(\theta)
\end{equation}
holds for all $\phi \in C$.
Here the right-hand side is a Riemann--Stieltjes integral of a vector-valued function $\phi$ with respect to the matrix-valued function $\eta$.
The existence of such an $\eta$ is ensured by a corollary of the Riesz representation theorem.
See \cite[Appendix~A]{Nishiguchi_2023} for Riemann--Stieltjes integrals for matrix-valued functions.
We note that \eqref{eq: DE of x(argdot; Hat{xi})} is not a differential equation of \textit{infinite retardation} because the right-hand side of \eqref{eq: DE of x(argdot; Hat{xi})} becomes
\begin{equation*}
	\int_{-r}^0 \dd \eta(\theta) \mspace{2mu} x\rbra*{t + \theta}
	= Lx_t
\end{equation*}
for all $t \in [r, \infty)$.

For the proof of Theorem~\ref{thm: regularity and DE of x(argdot; Hat{xi})}, the Riemann--Stieltjes convolution plays a fundamental role.
Here for a continuous function $f \colon [0, \infty) \to M_n\rbra*{\mathbb{K}}$ and a function $\alpha \colon [0, \infty) \to M_n\rbra*{\mathbb{K}}$ of locally bounded variation (i.e., of bounded variation on any closed and bounded interval of $[0, \infty)$), the function $\dd \alpha * f \colon [0, \infty) \to M_n\rbra*{\mathbb{K}}$ is defined by
\begin{equation*}
	\rbra*{\dd \alpha * f}(t) \coloneqq \int_0^t \dd \alpha(u) \mspace{2mu} f\rbra*{t - u}
	\mspace{25mu}
	(t \ge 0).
\end{equation*}
Since the right-hand side is a Riemann--Stieltjes integral, the function $\dd \alpha * f$ is called a \textit{Riemann--Stieltjes convolution}.
It holds that the function $\dd \alpha * f$ is a sum of a continuous function and a function of locally bounded variation (e.g., see \cite[Theorem~3.5]{Nishiguchi_2023}).
Therefore, $\dd \alpha * f$ is Riemann integrable on any closed and bounded interval of $[0, \infty)$.
By defining a function $\Check{\eta} \colon [0, \infty) \to M_n\rbra*{\mathbb{K}}$ by
\begin{equation}\label{eq: Check{eta}}
	\Check{\eta}(t) \coloneqq -\eta\rbra*{-t}
	\mspace{25mu}
	(t \ge 0),
\end{equation}
the differential equation~\eqref{eq: DE of x(argdot; Hat{xi})} can be expressed as $\Dot{x}(t) = \rbra*{ \dd \Check{\eta} * x|_{[0, \infty)} }(t)$.

In this paper, we give an extension of Theorem~\ref{thm: regularity and DE of x(argdot; Hat{xi})}.
The following is the main result of this paper.

\begin{theorem}\label{thm: main result}
For every $\phi \in \mathcal{M}^p$, where $p \in \sbra*{1, \infty}$, the mild solution $x\rbra*{\argdot; \phi}$ of the linear RFDE~\eqref{eq: linear RFDE} under $x_0 = \phi$ is locally absolutely continuous on $[0, \infty)$.
Furthermore, the following statements hold:
\begin{enumerate}
\item $x\rbra*{\argdot; \phi}|_{[0, \infty)} \colon [0, \infty) \to \mathbb{K}^n$ is differentiable almost everywhere and its derivative belongs to $\mathcal{L}^p_\mathrm{loc}\rbra*{[0, \infty), \mathbb{K}^n}$.
\item There exists a function $f\rbra*{\argdot; \phi} \in \mathcal{L}^p_\mathrm{loc}\rbra*{[0, \infty), \mathbb{K}^n}$ vanishing on $[r, \infty)$ such that
\begin{equation}\label{eq: DE of mild sol}
	\Dot{x}\rbra*{t; \phi}
	= \int_{-t}^0 \dd \eta(\theta) \mspace{2mu} x\rbra*{t + \theta; \phi} + f\rbra*{t; \phi}
	\mspace{25mu}
	(\text{a.e.\ $t \ge 0$})
\end{equation}
holds.
\end{enumerate}
\end{theorem}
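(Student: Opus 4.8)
The plan is to reduce the whole statement to a single application of Fubini's theorem followed by the fundamental theorem of calculus. Write $x \coloneqq x\rbra*{\argdot; \phi}$ and let $V$ denote the (finite) total variation of $\eta$ on $\sbra*{-r, 0}$. First I record that $x \in \mathcal{L}^p_\mathrm{loc}\rbra*{[-r, \infty), \mathbb{K}^n}$: on $\sbra*{-r, 0}$ it agrees almost everywhere with $\phi \in \mathcal{M}^p$, and on $[0, \infty)$ it is continuous, as noted after Definition~\ref{dfn: mild sol}. The starting point is the defining identity $x(t) = \phi(0) + L\int_0^t x_s \mspace{2mu} \dd s$ for $t \ge 0$. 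Since $\int_0^t x_s \mspace{2mu} \dd s \in C$, the representation~\eqref{eq: expression of L} of $L$ yields
\[
	x(t) = \phi(0) + \int_{-r}^0 \dd \eta(\theta) \int_0^t x\rbra*{s + \theta} \mspace{2mu} \dd s
	\mspace{25mu}
	(t \ge 0).
\]

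The key step is to interchange the Riemann--Stieltjes integral in $\theta$ with the Lebesgue integral in $s$. Setting $X(t) \coloneqq \int_0^t x$, the inner function $\theta \mapsto \int_0^t x\rbra*{s + \theta} \mspace{2mu} \dd s = X(t + \theta) - X(\theta)$ is continuous because $X$ is locally absolutely continuous; hence the outer Riemann--Stieltjes integral against $\eta$ equals the Lebesgue--Stieltjes integral against the associated finite signed measure $\dd \eta$ on $\sbra*{-r, 0}$. The integrand $(s, \theta) \mapsto x\rbra*{s + \theta}$ is jointly measurable, and since $x \in \mathcal{L}^1_\mathrm{loc}$ and $V < \infty$ it is integrable for the product of $\dd \eta$ and Lebesgue measure on $\sbra*{-r, 0} \times [0, t]$. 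Fubini's theorem therefore gives
\[
	x(t) = \phi(0) + \int_0^t \rho(s) \mspace{2mu} \dd s,
	\mspace{25mu}
	\rho(s) \coloneqq \int_{-r}^0 \dd \eta(\theta) \mspace{2mu} x\rbra*{s + \theta}.
\]
A Minkowski integral inequality bounds the $\mathcal{L}^p$-norm of $\rho$ on any $[0, T]$ by $V$ times the $\mathcal{L}^p$-norm of $x$ on $\sbra*{-r, T}$, so $\rho \in \mathcal{L}^p_\mathrm{loc}\rbra*{[0, \infty), \mathbb{K}^n}$. By the fundamental theorem of calculus, $x$ is then locally absolutely continuous on $[0, \infty)$ and differentiable almost everywhere with $\Dot{x} = \rho$; this proves the main assertion together with statement~(1).

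For statement~(2) I would split the measure $\dd \eta$ at $\theta = -t$. For a.e.\ $t \ge 0$ write $\rho(t) = \int_{[-t, 0]} \dd \eta(\theta) \mspace{2mu} x\rbra*{t + \theta} + f(t)$ with $f(t) \coloneqq \int_{[-r, -t)} \dd \eta(\theta) \mspace{2mu} x\rbra*{t + \theta}$. On $\sbra*{-t, 0}$ one has $t + \theta \ge 0$, where $x$ is continuous, so the first term is exactly the Riemann--Stieltjes integral $\int_{-t}^0 \dd \eta(\theta) \mspace{2mu} x\rbra*{t + \theta}$ appearing in~\eqref{eq: DE of mild sol}. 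Because $\eta$ is constant on $(-\infty, -r]$, the interval $\sbra*{-r, -t)$ carries no $\dd \eta$-mass once $t \ge r$, so $f$ vanishes on $[r, \infty)$. Finally, $f = \rho - \int_{-t}^0 \dd \eta(\theta) \mspace{2mu} x\rbra*{t + \theta}$ lies in $\mathcal{L}^p_\mathrm{loc}$: it is supported in $[0, r)$, $\rho \in \mathcal{L}^p$, and the subtracted term is measurable (again by Fubini) and bounded on $[0, r]$ by $V \sup_{[0, r]} \abs*{x}$. This gives~\eqref{eq: DE of mild sol} and completes the argument.

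The step I expect to demand the most care is the interchange of integrations: one has to pass from the Riemann--Stieltjes integral with respect to $\eta$ to a Lebesgue--Stieltjes integral with respect to the signed measure $\dd \eta$, verify joint measurability and product-integrability of $(s, \theta) \mapsto x\rbra*{s + \theta}$, and accept that $\dd \eta$ may carry atoms. The possible atoms are precisely why I define $f$ intrinsically as the difference $\rho(t) - \int_{-t}^0 \dd \eta(\theta) \mspace{2mu} x\rbra*{t + \theta}$ rather than by substituting $x = \phi$ on $\sbra*{-r, -t)$; that substitution is licit only $\dd \eta$-almost everywhere for almost every $t$, and the existence form of statement~(2) lets me sidestep the extra null-set argument it would otherwise require.
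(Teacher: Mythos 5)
Your argument is correct, but it follows a genuinely different route from the paper. The paper never leaves the Riemann--Stieltjes framework: it proves local absolute continuity by an $\ep$--$\delta$ argument on $t \mapsto Ly_t$ (with $y$ the indefinite integral of $x$) using the estimate $\abs{\int \dd\alpha\, f} \le \int \abs{f}\,\dd V_\alpha$; it then derives the differential equation from the convolution identity $x = \phi(0) + \mathcal{V}\rbra*{\dd\Check{\eta} * x} + G\rbra*{\argdot;\phi}$, reading off $f\rbra*{\argdot;\phi} = \Dot{G}\rbra*{\argdot;\phi}$; and for $1 < p < \infty$ it needs a separate density argument ($C$ dense in $L^p$ together with the bound $\norm*{g_\phi}_{L^p} \le \Var(\eta)\norm*{\phi}_{L^p}$) to place $\Dot{G}\rbra*{\argdot;\phi}$ in $\mathcal{L}^p_\mathrm{loc}$. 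You collapse all of this into one application of the classical Fubini theorem after converting the Riemann--Stieltjes integral into a Lebesgue--Stieltjes integral against the (matrix-valued) measure induced by $\eta$, which yields $\Dot{x} = \rho$ with $\rho(s) = \int_{-r}^0 \dd\eta(\theta)\, x\rbra*{s+\theta}$ directly, and the measure-theoretic Minkowski inequality then handles every $p \in \sbra*{1,\infty}$ uniformly, with no case split and no density argument. The price is the passage to Lebesgue--Stieltjes integration, where you must (and do) worry about atoms of $\dd\eta$, endpoint conventions when splitting at $\theta = -t$ (harmless, since $\eta$ is discontinuous at only countably many points and the claim is a.e.\ in $t$), and a.e.-definedness of $(s,\theta) \mapsto x\rbra*{s+\theta}$ under the product measure. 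Two small remarks: the measurability of $t \mapsto \int_{-t}^0 \dd\eta(\theta)\, x\rbra*{t+\theta}$ is more cleanly obtained by applying Fubini to $f(t) = \int_{[-r,-t)} \dd\eta(\theta)\, \phi\rbra*{t+\theta}$ itself (or by citing that $\dd\Check{\eta} * x|_{[0,\infty)}$ is a sum of a continuous function and one of locally bounded variation) than by invoking Fubini for the subtracted term as you phrase it; and your $f$, being defined only as a difference, is less explicit than the paper's $f = T\phi$, which is exactly the object the paper needs in Section~\ref{sec: discussion} to match the structural operator of Delfour and Manitius.
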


For an interval $J \subset \mathbb{R}$, a function $f \colon J \to \mathbb{K}^n$ is said to be \textit{locally absolutely continuous} if $f|_K \colon K \to \mathbb{K}^n$ is absolutely continuousfor any closed and bounded interval $K$ contained in $J$.
Let $\AC_\mathrm{loc}\rbra*{J, \mathbb{K}^n}$ denote the set of all locally absolutely continuous functions from $J$ to $E$.

We give a comment on an extension of Theorem~\ref{thm: main result} to the case that $\mathbb{K}^n$ is replaced with an infinite-dimensional Banach space $E$.
Such an extension is of course natural, however, some additional assumptions should be imposed.
On a condition on $L$, we need to assume the existence of an operator-valued function $\eta \colon (-\infty, 0] \to \mathcal{B}(E)$ such that $\eta|_{\sbra*{-r, 0}} \colon \sbra*{-r, 0} \to \mathcal{B}(E)$ is of strong bounded variation, $\eta$ is constant on $(-\infty, -r]$, and \eqref{eq: expression of L} holds for all $\psi \in C$.
The above integral is a Riemann--Stieltjes integral of a vector-valued function with respect to an operator-valued function.
See \cite[Section~2]{Diekmann--Gyllenberg--Thieme_1993} for details of this type of integrals.
For the above mentioned extension, the extent to which there is a difference between the boundedness of $L$ and the above assumption on $L$ should be discussed.

\vspace{0.5\baselineskip}

This paper is organized as follows.
The situation of the statement and its proof of Theorem~\ref{thm: main result} are different depending on the cases $p = 1$, $p = \infty$, and $1 < p < \infty$.
In Section~\ref{sec: p = 1 or infty}, we give proof of Theorem~\ref{thm: main result} for $p = 1$ or $p = \infty$.
For the proof for $p = 1$, a difficulty is that the function~\eqref{eq: t mapsto int_0^t x_s ds in C} is not necessarily locally absolutely continuous for $x \in \mathcal{L}^1_\mathrm{loc}\rbra*{[-r, \infty), \mathbb{K}^n}$.
In Subsection~\ref{subsec: local absolute continuity}, we resolve this difficulty by directly showing the local absolute continuity of the function~\eqref{eq: t mapsto L int_0^t x_s ds}.
In Subsection~\ref{subsec: differential equations}, we obtain the differential equation~\eqref{eq: DE of mild sol} by adopting methods used in \cite{Nishiguchi_2023}.
Contrary to the case $p = 1$, the case $p = \infty$ is easier to handle because one can show that the function~\eqref{eq: t mapsto int_0^t x_s ds in C} is locally Lipschitz continuous for $x \in \mathcal{L}^\infty_\mathrm{loc}\rbra*{[-r, \infty), \mathbb{K}^n}$.
Subsection~\ref{subsec: proof for p = 1 or infty} is on the proof of Theorem~\ref{thm: main result} for the cases $p = 1$ or $p = \infty$.
In Section~\ref{sec: 1 < p < infty}, we give proof of Theorem~\ref{thm: main result} for $1 < p < \infty$ by adopting a density argument used by Delfour and Manitius~\cite{Delfour--Manitius_1980}.
Finally, in Section~\ref{sec: discussion}, we see a connection between the notion of mild solutions and the solution concept used by Delfour and Manitius~\cite{Delfour--Manitius_1980}.
In Appendix~\ref{sec: RS integral}, we collect results on Riemann--Stieltjes integrals.

\section{Proof of the main result for \texorpdfstring{$p = 1$}{p = 1} or \texorpdfstring{$p = \infty$}{p = infty}}\label{sec: p = 1 or infty}

In this section, we show Theorem~\ref{thm: main result} for $p = 1$ or $p = \infty$.
The proof is divided into the following two parts for each $\phi \in \mathcal{M}^1$:
\begin{itemize}
\item Proof of the local absolute continuity of $x\rbra*{\argdot; \phi}|_{[0, \infty)}$.
\item To obtain the differential equation~\eqref{eq: DE of mild sol} which $x\rbra*{\argdot; \phi}|_{[0, \infty)}$ satisfies.
\end{itemize}

\subsection{Local absolute continuity}\label{subsec: local absolute continuity}

We use the following notation.

\begin{notation}
Let $\sbra*{a, b}$ be a closed and bounded interval of $\mathbb{R}$.
For each matrix-valued function $\alpha \colon \sbra*{a, b} \to M_n\rbra*{\mathbb{K}}$ of bounded variation with respect to the operator norm $\abs*{\argdot}$ on $M_n\rbra*{\mathbb{K}}$, let $V_\alpha \colon \sbra*{a, b} \to \mathbb{R}$ denote its total variation function.
Namely,
\begin{equation*}
	V_\alpha(t) \coloneqq \Var\rbra*{\alpha|_{\sbra*{a, t}}}
\end{equation*}
holds for all $t \in \sbra*{a, b}$.
Here $\Var(\alpha)$ denotes the total variation of $\alpha$, and we interpret $V_\alpha(a) = 0$.
\end{notation}

We refer the reader to \cite[Chapter~9]{Shapiro_2018} for a reference of scalar-valued functions of bounded variation.
The following is a key result.

\begin{theorem}\label{thm: loc AC of t mapsto L int_0^t x_s ds}
For any $x \in \mathcal{L}^1_\mathrm{loc}\rbra*{[-r, \infty), \mathbb{K}^n}$, the function~\eqref{eq: t mapsto L int_0^t x_s ds}
\begin{equation*}
	[0, \infty) \ni t \mapsto L\int_0^t x_s \mspace{2mu} \dd s \in \mathbb{K}^n
\end{equation*}
is locally absolutely continuous.
\end{theorem}

The following lemma is necessary for the proof of Theorem~\ref{thm: loc AC of t mapsto L int_0^t x_s ds}.
The corresponding statement for scalar-valued case is given in \cite[Theorem~5b of Chapter I]{Widder_1941} without proof .

\begin{lemma}\label{lem: sharp estimate of RS integral}
Let $\sbra*{a, b}$ be a closed and bounded interval of $\mathbb{R}$.
Let $f \colon \sbra*{a, b} \to M_n\rbra*{\mathbb{K}}$ be continuous and $\alpha \colon \sbra*{a, b} \to M_n\rbra*{\mathbb{K}}$ be of bounded variation.
Then
\begin{equation}\label{eq: sharp estimate of RS integral}
	\abs*{\int_a^b \dd \alpha(t) \mspace{2mu} f(t)}
	\le \int_a^b \abs*{f(t)} \mspace{2mu} \dd V_\alpha(t)
\end{equation}
holds.
Here the right-hand side is the Riemann--Stieltjes integral of the real-valued continuous function $\sbra*{a, b} \ni t \mapsto \abs*{f(t)} \in \mathbb{R}$ with respect to the monotonically increasing function $V_\alpha$.
\end{lemma}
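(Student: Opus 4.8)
The plan is to prove the inequality at the level of Riemann--Stieltjes sums and then pass to the limit. Fix a partition $P \colon a = t_0 < t_1 < \dots < t_m = b$ together with tags $\xi_i \in \sbra*{t_{i-1}, t_i}$, and consider the Riemann--Stieltjes sum
\begin{equation*}
	S(P, \xi) = \sum_{i=1}^m \bigl[ \alpha(t_i) - \alpha(t_{i-1}) \bigr] f(\xi_i)
\end{equation*}
approximating $\int_a^b \dd \alpha(t) \mspace{2mu} f(t)$. Since $\abs*{\argdot}$ is the operator norm, it is submultiplicative, so by the triangle inequality
\begin{equation*}
	\abs*{S(P, \xi)} \le \sum_{i=1}^m \abs*{\alpha(t_i) - \alpha(t_{i-1})} \mspace{2mu} \abs*{f(\xi_i)}.
\end{equation*}

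The key step is the elementary bound $\abs*{\alpha(t_i) - \alpha(t_{i-1})} \le V_\alpha(t_i) - V_\alpha(t_{i-1})$, which holds because the right-hand side equals $\Var\rbra*{\alpha|_{\sbra*{t_{i-1}, t_i}}}$ while the left-hand side is the variation of $\alpha$ taken over the trivial two-point partition $\Set*{t_{i-1}, t_i}{}$ of $\sbra*{t_{i-1}, t_i}$. Substituting this into the previous estimate yields
\begin{equation*}
	\abs*{S(P, \xi)} \le \sum_{i=1}^m \bigl[ V_\alpha(t_i) - V_\alpha(t_{i-1}) \bigr] \abs*{f(\xi_i)},
\end{equation*}
and the right-hand side is precisely a Riemann--Stieltjes sum of the continuous real-valued function $t \mapsto \abs*{f(t)}$ against the monotonically increasing function $V_\alpha$, for the very same partition $P$ and tags $\xi$.

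Finally, I would take the limit as the mesh $\max_i (t_i - t_{i-1})$ tends to $0$. Since $f$, and hence $\abs*{f}$, is continuous while both $\alpha$ and $V_\alpha$ are of bounded variation, both Riemann--Stieltjes integrals exist and are the common limit of their approximating sums; continuity of the norm lets me carry $\abs*{\argdot}$ through the limit on the left-hand side, which gives \eqref{eq: sharp estimate of RS integral}. The only point requiring care is the mode of convergence: I would invoke the standard fact (collected in Appendix~\ref{sec: RS integral}) that, for a continuous integrand and an integrator of bounded variation, the Riemann--Stieltjes sums converge to the integral as the mesh shrinks regardless of the choice of tags, so that a single sequence of partitions controls both sides of the estimate simultaneously. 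This is a mild technical obstacle rather than a genuine difficulty, and it is the only place where the hypotheses of continuity of $f$ and bounded variation of $\alpha$ are used beyond the pointwise algebra above.
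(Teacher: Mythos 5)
Your proposal is correct and follows essentially the same route as the paper's own proof: bound the Riemann--Stieltjes sum term by term via the triangle inequality and the estimate $\abs*{\alpha(t_i) - \alpha(t_{i-1})} \le V_\alpha(t_i) - V_\alpha(t_{i-1})$ (which rests on the additivity of the total variation over subintervals), then pass to the limit as the mesh tends to zero. No substantive difference.
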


\begin{proof}
As in scalar-valued case,
\begin{equation*}
	\Var\rbra*{\alpha|_{\sbra*{a, c}}} + \Var\rbra*{\alpha|_{\sbra*{c, b}}}
	= \Var(\alpha)
\end{equation*}
holds for any $c \in \rbra*{a, b}$.
Therefore, for any subinterval $\sbra*{c, d} \subset \sbra*{a, b}$,
\begin{equation}\label{eq: estimate for total variation function}
	\abs*{\alpha(d) - \alpha(c)}
	\le \Var\rbra*{\alpha|_{\sbra*{c, d}}}
	= V_\alpha(d) - V_\alpha(c)
\end{equation}
holds.

Let $\rbra*{t_k}_{k = 0}^m$ give a partition of $\sbra*{a, b}$ and $\rbra*{\tau_k}_{k = 1}^m$ be given so that $t_{k - 1} \le \tau_k \le t_k$ holds for each $1 \le k \le m$.
Then we have
\begin{align*}
	\abs*{ \sum_{k = 1}^m \sbra*{ \alpha\rbra*{t_k} - \alpha\rbra*{t_{k - 1}} }f\rbra*{\tau_k} }
	&\le \sum_{k = 1}^m \abs*{\alpha\rbra*{t_k} - \alpha\rbra*{t_{k - 1}}}\abs*{f\rbra*{\tau_k}} \\
	&\le \sum_{k = 1}^m \mspace{2mu} \abs*{f\rbra*{\tau_k}} \sbra*{ V_\alpha\rbra*{t_k} - V_\alpha\rbra*{t_{k - 1}} },
\end{align*}
where \eqref{eq: estimate for total variation function} is used.
By taking the limit as $\max_{1 \le k \le m} \rbra*{t_k - t_{k - 1}} \to 0$, we obtain \eqref{eq: sharp estimate of RS integral}.
\end{proof}

\begin{proof}[Proof of Theorem~\ref{thm: loc AC of t mapsto L int_0^t x_s ds}]
For a given $x \in \mathcal{L}^1_\mathrm{loc}\rbra*{[-r, \infty), \mathbb{K}^n}$, we define $y \in \AC_\mathrm{loc}\rbra*{[-r, \infty), \mathbb{K}^n}$ by
\begin{equation}\label{eq: indefinite integral of x}
	y(t) \coloneqq \int_0^t x(s) \mspace{2mu} \dd s
	\mspace{25mu}
	(t \in [-r, \infty)).
\end{equation}
Then we have
\begin{equation*}
	\rbra*{\int_0^t x_s \mspace{2mu} \dd s}(\theta)
	= \int_\theta^{t + \theta} x(s) \mspace{2mu} \dd s
	= y\rbra*{t + \theta} + \int_\theta^0 x(s) \mspace{2mu} \dd s
\end{equation*}
for all $t \ge 0$ and $\theta \in \sbra*{-r, 0}$, which yields
\begin{equation*}
	L\int_0^t x_s \mspace{2mu} \dd s
	= Ly_t + \int_{-r}^0 \dd \eta(\theta) \mspace{2mu} \rbra*{\int_\theta^0 x(s) \mspace{2mu} \dd s}
\end{equation*}
for all $t \ge 0$.
Therefore, the local absolute continuity of \eqref{eq: t mapsto L int_0^t x_s ds} is reduced to that of $[0, \infty) \ni t \mapsto Ly_t \in \mathbb{K}^n$.

We fix $T > 0$ and show that $\sbra*{0, T} \ni t \mapsto Ly_t \in \mathbb{K}^n$ is absolutely continuous.
Let $\ep > 0$ be given.
By the absolute continuity of $y|_{\sbra*{-r, T}}$, one can choose a $\delta > 0$ with the following property: for any pairwise disjoint finite open intervals $\rbra*{s_1, t_1}, \dots, \rbra*{s_m, t_m}$ in $\sbra*{-r, T}$, $\sum_{j = 1}^m \rbra*{t_j - s_j} < \delta$ implies
\begin{equation*}
	\sum_{j = 1}^m \abs*{y(t_j) - y(s_j)} < \ep.
\end{equation*}
Let $\rbra*{s_1, t_1}, \dots, \rbra*{s_m, t_m}$ be pairwise disjoint finite open intervals in the interval $\sbra*{0, T}$ with $\sum_{j = 1}^m \rbra*{t_j - s_j} < \delta$.
From Lemma~\ref{lem: sharp estimate of RS integral}, we have
\begin{align*}
	\sum_{j = 1}^m \abs*{Ly_{t_j} - Ly_{s_j}}
	&= \sum_{j = 1}^m \abs*{ \int_{-r}^0 \dd \eta(\theta) \mspace{2mu} [y\rbra*{t_j + \theta} - y\rbra*{s_j + \theta}] } \\
	&\le \sum_{j = 1}^m \int_{-r}^0 \abs*{y\rbra*{t_j + \theta} - y\rbra*{s_j + \theta}} \mspace{2mu} \dd V_\eta(\theta),
\end{align*}
where the last term is equal to $\int_{-r}^0 \sum_{j = 1}^m \abs*{y\rbra*{t_j + \theta} - y\rbra*{s_j + \theta}} \mspace{2mu} \dd V_\eta(\theta)$.
Since for each $\theta \in \sbra*{-r, 0}$,
\begin{equation*}
	\rbra*{s_1 + \theta, t_1 + \theta}, \dots, \rbra*{s_m + \theta, t_m + \theta}
\end{equation*}
are pairwise disjoint finite open intervals in $\sbra*{-r, T}$ with
\begin{equation*}
	\sum_{j = 1}^m \sbra*{\rbra*{t_j + \theta} - \rbra*{s_j + \theta}}
	= \sum_{j = 1}^m \rbra*{t_j - s_j}
	< \delta,
\end{equation*}
an estimate
\begin{equation*}
	\sum_{j = 1}^m \abs*{Ly_{t_j} - Ly_{s_j}}
	\le \Var(\eta) \cdot \ep
\end{equation*}
is obtained.
This shows the absolute continuity of $\sbra*{0, T} \ni t \mapsto Ly_t \in \mathbb{K}^n$.
\end{proof}

\begin{remark}
For a given $y \in \AC_\mathrm{loc}\rbra*{[-r, \infty), \mathbb{K}^n}$, the function
\begin{equation}
	[0, \infty) \ni t \mapsto y_t \in C
\end{equation}
is not necessarily locally absolutely continuous.
The reason is that one cannot change the order of summation and supremum in
\begin{equation*}
	\sum_{j = 1}^m \sup_{\theta \in \sbra*{-r, 0}} \abs*{y\rbra*{t_j + \theta} - y\rbra*{s_j + \theta}}.
\end{equation*}
Here $\rbra*{s_1, t_1}, \dots, \rbra*{s_m, t_m}$ are pairwise disjoint finite open intervals.
\end{remark}

\begin{theorem}\label{thm: loc AC of mild sol}
For any $\phi \in \mathcal{M}^1$, the mild solution $x(\argdot; \phi) \colon \dom(\phi) \cup [0, \infty) \to \mathbb{K}^n$ of \eqref{eq: linear RFDE} under $x_0 = \phi$ is locally absolutely continuous on $[0, \infty)$.
\end{theorem}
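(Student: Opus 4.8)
The plan is to recognize that this statement follows almost immediately from Theorem~\ref{thm: loc AC of t mapsto L int_0^t x_s ds} once it is combined with the defining equation of the mild solution, so that no new analytic work is needed. First I would record that, by Definition~\ref{dfn: mild sol}, the mild solution $x \coloneqq x(\argdot; \phi)$ is by construction an element of $\mathcal{L}^1_\mathrm{loc}\rbra*{[-r, \infty), \mathbb{K}^n}$. This is the crucial (and only) hypothesis required to apply Theorem~\ref{thm: loc AC of t mapsto L int_0^t x_s ds} to this particular $x$.

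Next I would invoke the first equation of the system~\eqref{eq: mild sol}, namely that
\begin{equation*}
	x(t) = \phi(0) + L\int_0^t x_s \mspace{2mu} \dd s
\end{equation*}
holds for all $t \ge 0$. By Theorem~\ref{thm: loc AC of t mapsto L int_0^t x_s ds}, the map $[0, \infty) \ni t \mapsto L\int_0^t x_s \mspace{2mu} \dd s \in \mathbb{K}^n$ is locally absolutely continuous. Since $\phi \in \mathcal{M}^1$ is defined at $0$, the term $\phi(0)$ is a fixed vector in $\mathbb{K}^n$, and adding this constant preserves local absolute continuity. I would therefore conclude that $x\rbra*{\argdot; \phi}|_{[0, \infty)}$ is locally absolutely continuous on $[0, \infty)$, which is the assertion of the theorem.

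I do not expect any genuine obstacle at this stage: the substantive analytic content has already been isolated in Theorem~\ref{thm: loc AC of t mapsto L int_0^t x_s ds}, whose proof carries out the real work by reducing to the indefinite integral $y$ of \eqref{eq: indefinite integral of x} and applying the sharp Riemann--Stieltjes estimate of Lemma~\ref{lem: sharp estimate of RS integral}. The one point deserving explicit mention is the observation that a mild solution is, by its very definition, a locally Lebesgue integrable function on $[-r, \infty)$, so that the hypothesis of Theorem~\ref{thm: loc AC of t mapsto L int_0^t x_s ds} is automatically satisfied and the conclusion is obtained directly from the integral representation~\eqref{eq: mild sol}.
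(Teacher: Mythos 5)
Your proposal is correct and follows exactly the paper's own argument: the mild solution is by definition an element of $\mathcal{L}^1_\mathrm{loc}\rbra*{[-r, \infty), \mathbb{K}^n}$, so Theorem~\ref{thm: loc AC of t mapsto L int_0^t x_s ds} applies to the right-hand side of the first equation in \eqref{eq: mild sol}, and adding the constant $\phi(0)$ preserves local absolute continuity. No further comment is needed.
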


\begin{proof}
By definition, $x \coloneqq x(\argdot; \phi)$ satisfies
\begin{equation*}
	x(t) = \phi(0) + L\int_0^t x_s \mspace{2mu} \dd s
\end{equation*}
for all $t \ge 0$.
Since the function $x$ belongs to $\mathcal{L}^1_\mathrm{loc}\rbra*{[-r, \infty), \mathbb{K}^n}$, it holds that the right-hand side of the above equation is locally absolute continuous with respect to $t \in [0, \infty)$ from Theorem~\ref{thm: loc AC of t mapsto L int_0^t x_s ds}.
\end{proof}

\subsection{Differential equations for mild solutions}\label{subsec: differential equations}

We use the following notations.

\begin{notation}
For each $\phi \in \mathcal{M}^1$, we define a function $\Bar{\phi} \colon \dom(\phi) \cup [0, \infty) \to \mathbb{K}^n$ by
\begin{equation*}
	\Bar{\phi}(t) \coloneqq
	\begin{cases}
		\phi(t) & (t \in \dom(\phi)) \\
		\phi(0) & (t \in [0, \infty)),
	\end{cases}
\end{equation*}
which is called the \textit{static prolongation} of $\phi$.
\end{notation}

\begin{notation}[\cite{Nishiguchi_2023}]
For each $\phi \in \mathcal{M}^1$, we define a function $G\rbra*{\argdot; \phi} \colon [0, \infty) \to \mathbb{K}^n$ by
\begin{equation*}
	G\rbra*{t; \phi}
	\coloneqq \int_{-r}^0 \dd \eta(\theta) \mspace{2mu} \rbra*{\int_\theta^0 \phi(s) \mspace{2mu} \dd s} + \int_{-r}^{-t} \dd \eta(\theta) \mspace{2mu} \rbra*{\int_0^{t + \theta} \Bar{\phi}(s) \mspace{2mu} \dd s}
	\mspace{25mu}
	(t \ge 0).
\end{equation*}
Since $\eta$ is constant on $(-\infty, -r]$, the function $G\rbra*{\argdot; \phi}$ is constant on $[r, \infty)$.
\end{notation}

\begin{notation}
For each locally Riemann integrable function $h \colon [0, \infty) \to M_n\rbra*{\mathbb{K}}$, we define a function $\mathcal{V}h \colon [0, \infty) \to M_n\rbra*{\mathbb{K}}$ by
\begin{equation*}
	\rbra*{\mathcal{V}h}(t)
	\coloneqq \int_0^t h(s) \mspace{2mu} \dd s
	\mspace{25mu}
	(t \in [0, \infty)),
\end{equation*}
where the integral is a Riemann integral.
We call $\mathcal{V}$ the \textit{Volterra operator}.
\end{notation}

In this subsection, we show the following theorem.

\begin{theorem}\label{thm: DE satisfied by mild sol}
Let $\phi \in \mathcal{M}^1$ be given.
Then $x\rbra*{\argdot; \phi}|_{[0, \infty)}$ is differentiable almost everywhere, and there exists a function $f\rbra*{\argdot; \phi} \in \mathcal{L}^1_\mathrm{loc}\rbra*{[0, \infty), \mathbb{K}^n}$ vanishing at $[r, \infty)$ such that \eqref{eq: DE of mild sol}
\begin{equation*}
	\Dot{x}\rbra*{t; \phi} = \int_{-t}^0 \dd \eta(\theta) \mspace{2mu} x\rbra*{t + \theta; \phi} + f\rbra*{t; \phi}
	\mspace{25mu}
	(\text{a.e.\ $t \ge 0$})
\end{equation*}
holds.
\end{theorem}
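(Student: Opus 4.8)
The plan is to leverage the local absolute continuity already established and reduce the whole problem to the differentiation of a single Riemann--Stieltjes convolution. Since Theorem~\ref{thm: loc AC of mild sol} gives that $x \coloneqq x(\argdot; \phi)|_{[0,\infty)}$ is locally absolutely continuous, it is differentiable almost everywhere, which settles the first assertion; it remains only to identify $\Dot{x}$ and to exhibit $f(\argdot; \phi)$. Writing $y \coloneqq \mathcal{V}x = \int_0^{\argdot} x(s)\,\dd s \in \AC_\mathrm{loc}([-r,\infty), \mathbb{K}^n)$, so that $\Dot{y} = x$ a.e., I would reuse the computation in the proof of Theorem~\ref{thm: loc AC of t mapsto L int_0^t x_s ds} together with $x = \phi$ a.e.\ on $[-r,0]$ to obtain the decomposition
\begin{equation*}
	x(t) = \phi(0) + G(t; \phi) + W(t),
	\qquad
	W(t) \coloneqq \int_{-t}^0 \dd\eta(\theta)\, y(t+\theta)
	\quad (t \ge 0).
\end{equation*}
This is nothing but the splitting of $Ly_t = \int_{-r}^0 \dd\eta(\theta)\, y(t+\theta)$ at $\theta = -t$: the part over $[-r, -t]$ has $t + \theta \le 0$, hence $y(t+\theta) = \int_0^{t+\theta}\Bar{\phi}(s)\,\dd s$ and reproduces the second term of $G(\argdot; \phi)$, while the constant term $\int_{-r}^0 \dd\eta(\theta)\int_\theta^0 \phi(s)\,\dd s$ accounts for the first term of $G(\argdot; \phi)$, and the part over $[-t, 0]$ is exactly $W$.

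The crux is to differentiate $W$. The point of the truncated lower limit $-t$ is that for $\theta \in [-t, 0]$ the argument $t + \theta$ lies in $[0, t]$, where $x$ --- and hence $y$ --- is continuous, so every Riemann--Stieltjes integral involving $x$ in play uses only the continuous branch $x|_{[0,\infty)}$. First I would rewrite $W$ as a Riemann--Stieltjes convolution: the substitution $\theta = -u$ combined with $\Check{\eta}(t) = -\eta(-t)$ turns $W(t)$ into $\int_0^t \dd\Check{\eta}(u)\, y(t-u) = (\dd\Check{\eta} * y|_{[0,\infty)})(t)$. Then, using the interchange identity $\dd\Check{\eta} * \mathcal{V}g = \mathcal{V}(\dd\Check{\eta} * g)$ for continuous $g$ (a Fubini-type property of the Riemann--Stieltjes convolution; cf.\ \cite{Nishiguchi_2023}) with $g = x|_{[0,\infty)}$, I obtain $W = \mathcal{V}(\dd\Check{\eta} * x|_{[0,\infty)})$. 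Since $\dd\Check{\eta} * x|_{[0,\infty)}$ is a sum of a continuous function and a function of locally bounded variation, it is bounded and Riemann integrable on every compact interval, so $W$ is locally Lipschitz and
\begin{equation*}
	\Dot{W}(t) = (\dd\Check{\eta} * x|_{[0,\infty)})(t) = \int_{-t}^0 \dd\eta(\theta)\, x(t+\theta)
	\quad (\text{a.e.\ } t \ge 0),
\end{equation*}
the last equality being the same $\theta = -u$ substitution applied to $x$ in place of $y$.

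Finally, since $x$ and $W$ are both locally absolutely continuous, so is $G(\argdot; \phi) = x - \phi(0) - W$; I would therefore set $f(\argdot; \phi) \coloneqq \Dot{G}(\argdot; \phi) \in \mathcal{L}^1_\mathrm{loc}([0,\infty), \mathbb{K}^n)$. Because $G(\argdot; \phi)$ is constant on $[r, \infty)$, its derivative $f(\argdot; \phi)$ vanishes there, and differentiating $x = \phi(0) + G(\argdot; \phi) + W$ yields precisely \eqref{eq: DE of mild sol}. As a consistency check, for $\phi = \Hat{\xi}$ one has $G(\argdot; \Hat{\xi}) \equiv 0$, so $f(\argdot; \Hat{\xi}) = 0$, recovering Theorem~\ref{thm: regularity and DE of x(argdot; Hat{xi})}.

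The main obstacle I anticipate is the middle step: justifying the differentiation of the Riemann--Stieltjes convolution --- namely the substitution relating $\int_{-t}^0 \dd\eta(\theta)\,g(t+\theta)$ to $\int_0^t \dd\Check{\eta}(u)\,g(t-u)$, with its orientation bookkeeping through $\Check{\eta}(t) = -\eta(-t)$, and the interchange $\dd\Check{\eta} * \mathcal{V}g = \mathcal{V}(\dd\Check{\eta} * g)$ --- all the while scrupulously confining every Stieltjes integrand involving $x$ to the continuous branch $x|_{[0,\infty)}$. Once this is in place, a.e.\ differentiability, membership in $\mathcal{L}^1_\mathrm{loc}$, and vanishing of $f(\argdot; \phi)$ on $[r, \infty)$ all follow formally from the decomposition.
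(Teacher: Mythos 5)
Your proof is correct and follows essentially the same route as the paper: both rest on the identity $x = \phi(0) + \dd\Check{\eta} * \mathcal{V}x + G\rbra*{\argdot; \phi}$, the interchange $\dd\Check{\eta} * \mathcal{V}x = \mathcal{V}\rbra*{\dd\Check{\eta} * x}$, and the local absolute continuity of $x$ from Theorem~\ref{thm: loc AC of mild sol} to conclude that $G\rbra*{\argdot; \phi}$ is locally absolutely continuous and to set $f\rbra*{\argdot; \phi} = \Dot{G}\rbra*{\argdot; \phi}$. The only difference is that you derive the identity \eqref{eq: Eq of mild sol} explicitly from the splitting of $Ly_t$ at $\theta = -t$, whereas the paper cites it from \cite[Section~6.2]{Nishiguchi_2023}.
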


\begin{proof}
Let $x \coloneqq x\rbra*{\argdot; \phi}|_{[0, \infty)}$.
As in \cite[Section~6.2]{Nishiguchi_2023}, we have
\begin{equation}\label{eq: Eq of mild sol}
	x(t)
	= \phi(0) + \rbra*{\dd \Check{\eta} * \mathcal{V}x}(t) + G\rbra*{t; \phi}
\end{equation}
for all $t \ge 0$.
Here $x \in \AC_\mathrm{loc}\rbra*{[0, \infty), \mathbb{K}^n}$ holds from Theorem~\ref{thm: loc AC of mild sol}.
Furthermore,
\begin{equation*}
	\dd \Check{\eta} * \mathcal{V}x = \mathcal{V}\rbra*{\dd \Check{\eta} * x}
\end{equation*}
holds from \cite[Theorem~3.7]{Nishiguchi_2023}, where $\dd \Check{\eta} * x$ is a locally Riemann integrable function.
Therefore, \eqref{eq: Eq of mild sol} yields that $G\rbra*{\argdot; \phi} \in \AC_\mathrm{loc}([0, \infty), \mathbb{K}^n)$ holds.
It also implies that $G\rbra*{\argdot; \phi}$ is differentiable almost everywhere, whose derivative belongs to $\mathcal{L}^1_\mathrm{loc}\rbra*{[0, \infty), \mathbb{K}^n}$.
Thus, differentiating both sides of \eqref{eq: Eq of mild sol}, we have
\begin{equation*}
	\Dot{x}(t)
	= \rbra*{\dd \Check{\eta} * x}(t) + \Dot{G}\rbra*{t; \phi}
	\mspace{25mu}
	(\text{a.e.\ $t \ge 0$}).
\end{equation*}
Since $G\rbra*{\argdot; \phi}$ is constant on $[r, \infty)$, \eqref{eq: DE of mild sol} is obtained with $f\rbra*{\argdot; \phi} = \Dot{G}\rbra*{\argdot; \phi}$.
\end{proof}

\subsection{Proof of Theorem~\ref{thm: main result} for \texorpdfstring{$p = 1$}{p = 1} or \texorpdfstring{$p = \infty$}{p = infty}}\label{subsec: proof for p = 1 or infty}

The proof of Theorem~\ref{thm: main result} for $p = 1$ is obtained by combining the proofs of Theorems~\ref{thm: loc AC of mild sol} and \ref{thm: DE satisfied by mild sol}.
We now give the proof of Theorem~\ref{thm: main result} for $p = \infty$.
For the proof, the following proposition is used.

\begin{proposition}\label{prop: loc Lip of t mapsto int_0^t x_s ds in C}
For any $x \in \mathcal{L}^\infty_\mathrm{loc}\rbra*{[-r, \infty), \mathbb{K}^n}$, the function~\eqref{eq: t mapsto int_0^t x_s ds in C}
\begin{equation*}
	[0, \infty) \ni t \mapsto \int_0^t x_s \mspace{2mu} \dd s \in C
\end{equation*}
is locally Lipschitz continuous.
Consequently, the function~\eqref{eq: t mapsto L int_0^t x_s ds}
\begin{equation*}
	[0, \infty) \ni t \mapsto L\int_0^t x_s \mspace{2mu} \dd s \in \mathbb{K}^n
\end{equation*}
is also locally Lipschitz continuous.
\end{proposition}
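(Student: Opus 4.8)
The plan is to reduce everything to the indefinite integral $y$ of $x$ and then to exploit that essential boundedness produces a Lipschitz estimate which is \emph{uniform} in the shift $\theta$. Concretely, I would reuse the computation carried out in the proof of Theorem~\ref{thm: loc AC of t mapsto L int_0^t x_s ds}: defining $y \in \AC_\mathrm{loc}\rbra*{[-r, \infty), \mathbb{K}^n}$ by \eqref{eq: indefinite integral of x}, one has
\begin{equation*}
	\rbra*{\int_0^t x_s \mspace{2mu} \dd s}(\theta)
	= y\rbra*{t + \theta} + \int_\theta^0 x(s) \mspace{2mu} \dd s
\end{equation*}
for all $t \ge 0$ and $\theta \in \sbra*{-r, 0}$. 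The second term is independent of $t$, so it cancels in any difference, leaving
\begin{equation*}
	\rbra*{\int_0^t x_s \mspace{2mu} \dd s}(\theta) - \rbra*{\int_0^{t'} x_s \mspace{2mu} \dd s}(\theta)
	= y\rbra*{t + \theta} - y\rbra*{t' + \theta}
\end{equation*}
for all $t, t' \ge 0$ and $\theta \in \sbra*{-r, 0}$.

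Next I would fix $T > 0$ and set $M \coloneqq \esssup_{u \in \sbra*{-r, T}} \abs*{x(u)}$, which is finite precisely because $x \in \mathcal{L}^\infty_\mathrm{loc}\rbra*{[-r, \infty), \mathbb{K}^n}$. For $t, t' \in \sbra*{0, T}$ and any $\theta \in \sbra*{-r, 0}$, the arguments $t + \theta$ and $t' + \theta$ lie in $\sbra*{-r, T}$, so
\begin{equation*}
	\abs*{y\rbra*{t + \theta} - y\rbra*{t' + \theta}}
	= \abs*{\int_{t' + \theta}^{t + \theta} x(s) \mspace{2mu} \dd s}
	\le M \abs*{t - t'}.
\end{equation*}
The crucial point is that this bound does not depend on $\theta$; taking the supremum over $\theta \in \sbra*{-r, 0}$ therefore yields $\norm*{\int_0^t x_s \mspace{2mu} \dd s - \int_0^{t'} x_s \mspace{2mu} \dd s} \le M \abs*{t - t'}$, which is the asserted local Lipschitz continuity on each $\sbra*{0, T}$. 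This uniformity in $\theta$ is exactly what is unavailable in the $\mathcal{L}^1$ setting (cf.\ the Remark following Theorem~\ref{thm: loc AC of t mapsto L int_0^t x_s ds}, where the obstruction is the interchange of supremum and summation); it is what makes the case $p = \infty$ genuinely easier, and consequently I expect no real obstacle here beyond recording this observation.

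Finally, the consequence for \eqref{eq: t mapsto L int_0^t x_s ds} is immediate from the continuity of $L$: there is a constant $c \ge 0$ with $\abs*{L\psi} \le c \norm*{\psi}$ for all $\psi \in C$, whence
\begin{equation*}
	\abs*{L\int_0^t x_s \mspace{2mu} \dd s - L\int_0^{t'} x_s \mspace{2mu} \dd s}
	\le c M \abs*{t - t'}
\end{equation*}
for all $t, t' \in \sbra*{0, T}$. Since $T > 0$ was arbitrary, both $t \mapsto \int_0^t x_s \mspace{2mu} \dd s$ and $t \mapsto L\int_0^t x_s \mspace{2mu} \dd s$ are locally Lipschitz continuous on $[0, \infty)$.
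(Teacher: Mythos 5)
Your proof is correct and follows essentially the same route as the paper: reduce to the indefinite integral $y$ of $x$ via the identity $\rbra*{\int_0^t x_s \, \dd s}(\theta) = y(t+\theta) + \int_\theta^0 x(s)\,\dd s$, observe that $y|_{\sbra*{-r,T}}$ is Lipschitz with constant $\esssup_{\sbra*{-r,T}}\abs*{x}$ uniformly in the shift $\theta$, and pass the bound through the supremum norm and the boundedness of $L$. The only difference is cosmetic: the paper phrases the bound in terms of $\lip\rbra*{y|_{\sbra*{-r,T}}}$ where you make the constant explicit as the essential supremum of $\abs{x}$.
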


\begin{proof}
By defining a locally Lipschitz continuous function $y \colon [-r, \infty) \to \mathbb{K}^n$ by \eqref{eq: indefinite integral of x}, it is sufficient to show the local Lipschitz continuity of $[0, \infty) \ni t \mapsto y_t \in C$.

Let $T > 0$ be fixed.
The Lipschitz continuity of $y|_{\sbra*{-r, T}}$ yields that
\begin{equation*}
	\norm*{y_{t_1} - y_{t_2}}
	= \sup_{\theta \in \sbra*{-r, 0}} \abs*{y(t_1 + \theta) - y(t_2 + \theta)} \\
	\le \lip\rbra*{ y|_{\sbra*{-r, T}} } \cdot \abs*{t_1 - t_2}
\end{equation*}
holds for all $t_1, t_2 \in \sbra*{0, T}$.
Here $\lip\rbra*{ y|_{\sbra*{-r, T}} }$ denotes the (best) Lipschitz constant of the function $y|_{\sbra*{-r, T}}$.
The above argument shows the local Lipschitz continuity.
\end{proof}

\begin{proof}[Proof of Theorem~\ref{thm: main result} for $p = \infty$]
As in the proof of Theorem~\ref{thm: loc AC of mild sol}, $x\rbra*{\argdot; \phi}|_{[0, \infty)}$ is locally Lipschitz continuous from Proposition~\ref{prop: loc Lip of t mapsto int_0^t x_s ds in C}.
By combining with this property, \eqref{eq: Eq of mild sol} yields that $G\rbra*{\argdot; \phi}$ is also locally Lipschitz continuous.
This implies that $G\rbra*{\argdot; \phi}$ is differentiable almost everywhere, whose derivative belongs to $\mathcal{L}^\infty_\mathrm{loc}\rbra*{[0, \infty), \mathbb{K}^n}$.
Therefore, the statements of Theorem~\ref{thm: main result} for $p = \infty$ are obtained with $f\rbra*{\argdot; \phi} = \Dot{G}\rbra*{\argdot; \phi}$ as the proof of Theorem~\ref{thm: DE satisfied by mild sol}.
\end{proof}

\section{Proof of the main result for \texorpdfstring{$1 < p < \infty$}{1 < p < infty}}\label{sec: 1 < p < infty}

To prove Theorem~\ref{thm: main result} for $1 < p < \infty$, we need to show that the derivative $\Dot{G}\rbra*{\argdot; \phi}$ belongs to $\mathcal{L}^p_\mathrm{loc}\rbra*{[0, \infty), \mathbb{K}^n}$ for any $\phi \in \mathcal{M}^p$.
For this purpose, we adopt a density argument used by Delfour and Manitius~\cite{Delfour--Manitius_1980}.

\subsection{Forcing term for \texorpdfstring{$\phi \in C$}{phi in C}}

We use the following notation.

\begin{notation}[\cite{Nishiguchi_2023}]
For each $\phi \in C$, we define a function $g\rbra*{\argdot; \phi} \colon [0, \infty) \to \mathbb{K}^n$ by
\begin{align*}
	g(t; \phi)
	\coloneqq \int_{-r}^{-t} \dd \eta(\theta) \mspace{2mu} \Bar{\phi}\rbra*{t + \theta}
	= \int_t^r \dd \Check{\eta}(u) \mspace{2mu} \Bar{\phi}\rbra*{t - u}
	\mspace{25mu}
	(t \ge 0).
\end{align*}
Since $\eta$ is constant on $(-\infty, -r]$, $g\rbra*{\argdot; \phi}$ vanishes on $[r, \infty)$.
Let $g_\phi \coloneqq g\rbra*{\argdot; \phi}|_{\sbra*{0, r}}$ for each $\phi \in C$.
\end{notation}

The following result is obtained in \cite[Lemma~6.8 and Theorem~6.12]{Nishiguchi_2023}.

\begin{theorem}[\cite{Nishiguchi_2023}]\label{thm: g(argdot; phi)}
For each $\phi \in C$, the function $g\rbra*{\argdot; \phi}$ is locally Riemann integrable.
Furthermore, $G\rbra*{\argdot; \phi} = \mathcal{V}g\rbra*{\argdot; \phi}$ holds.
\end{theorem}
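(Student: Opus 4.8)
The plan is to treat the two assertions separately. For the first I would establish, for every $t \ge 0$, the explicit decomposition
\begin{equation*}
	g(t; \phi) = L\Bar{\phi}_t - [\eta(0) - \eta(-t)]\phi(0),
\end{equation*}
where $\Bar{\phi}_t \in C$ is the history segment at $t$ of the static prolongation $\Bar{\phi}$ (which is continuous on $[-r, \infty)$ because $\phi \in C$). This follows by writing the Riemann--Stieltjes integral defining $g(t; \phi)$ as $\int_{-r}^0 - \int_{-t}^0$ — additivity being valid since $\Bar{\phi}(t + \argdot)$ is continuous at the splitting point $-t$ — and then using that $\Bar{\phi}(t + \theta) = \phi(0)$ for $\theta \in [-t, 0]$, so that $\int_{-t}^0 \dd\eta(\theta) \phi(0) = [\eta(0) - \eta(-t)]\phi(0)$.

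For the local Riemann integrability I would read off from this decomposition that $g(\argdot; \phi)$ is, on $[0, r]$, the sum of a continuous function and a function of bounded variation. Indeed, since $\Bar{\phi}$ is uniformly continuous on compact intervals, $t \mapsto \Bar{\phi}_t \in C$ is continuous, and composing with the continuous linear map $L$ shows that $t \mapsto L\Bar{\phi}_t$ is continuous; on the other hand $t \mapsto [\eta(0) - \eta(-t)]\phi(0)$ is of bounded variation because $\eta$ is of bounded variation on $[-r, 0]$. A sum of a continuous function and a function of bounded variation is Riemann integrable on every compact interval (this is precisely the reasoning invoked in the excerpt to conclude Riemann integrability of a Riemann--Stieltjes convolution, cf.\ \cite[Theorem~3.5]{Nishiguchi_2023}), and $g(\argdot; \phi)$ vanishes on $[r, \infty)$; hence it is locally Riemann integrable.

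For the identity $G(\argdot; \phi) = \mathcal{V}g(\argdot; \phi)$, which does not need the decomposition, I would compute $\int_0^t g(s; \phi) \, \dd s$ directly by interchanging the outer Riemann integral in $s$ with the inner Riemann--Stieltjes integral in $\theta$. The region $\Set*{(s, \theta)}{0 \le s \le t,\ -r \le \theta \le -s}$ splits according to whether $\theta \le -t$: for $\theta \in [-r, -t]$ the variable $s$ ranges over $[0, t]$, while for $\theta \in [-t, 0]$ it ranges over $[0, -\theta]$. Performing the inner $s$-integrals after the substitution $\sigma = s + \theta$ (and using $\Bar{\phi} = \phi$ on the relevant subintervals of $[-r, 0]$) gives
\begin{equation*}
	\int_0^t g(s; \phi) \, \dd s = \int_{-r}^{-t} \dd\eta(\theta) \int_\theta^{t + \theta} \phi(\sigma) \, \dd\sigma + \int_{-t}^0 \dd\eta(\theta) \int_\theta^0 \phi(\sigma) \, \dd\sigma.
\end{equation*}
Splitting the first (constant) term of $G(t; \phi)$ over $[-r, -t] \cup [-t, 0]$ and combining $\int_\theta^0 \phi + \int_0^{t + \theta} \phi = \int_\theta^{t + \theta} \phi$ on $[-r, -t]$ reproduces exactly this expression, whence $G(t; \phi) = \int_0^t g(s; \phi) \, \dd s$.

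The main obstacle is the Fubini-type interchange of the Riemann integral in $s$ with the Riemann--Stieltjes integral in $\theta$, together with the careful bookkeeping of the region splitting and of the two regimes of the static prolongation ($\Bar{\phi} = \phi$ on $[-r, 0]$ versus $\Bar{\phi} \equiv \phi(0)$ on $[0, \infty)$). Since the integrand $(s, \theta) \mapsto \Bar{\phi}(s + \theta)$ is jointly continuous and $\eta$ is of bounded variation, the interchange is legitimate; I would isolate it as a Fubini-type lemma for Riemann--Stieltjes integrals, in the spirit of the results collected in Appendix~\ref{sec: RS integral}, and apply it separately on the two subregions.
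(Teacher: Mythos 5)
Your proposal is correct, but it proves the theorem by a genuinely different route from the one the paper takes. The paper's argument is indirect: it uses that for $\phi \in C$ the mild solution coincides with the classical solution $x$, which is continuously differentiable on $[0, \infty)$ with $\Dot{x}(t) = Lx_t$; splitting the Riemann--Stieltjes integral at $\theta = -t$ gives $\Dot{x}(t) = \rbra*{\dd\Check{\eta} * x}(t) + g\rbra*{t; \phi}$, so $g\rbra*{\argdot; \phi}$ is the difference of a continuous function and a Riemann--Stieltjes convolution (known to be locally Riemann integrable), and integrating this identity and comparing with \eqref{eq: Eq of mild sol} via $\mathcal{V}\rbra*{\dd\Check{\eta}*x} = \dd\Check{\eta}*\mathcal{V}x$ yields $G\rbra*{\argdot; \phi} = \mathcal{V}g\rbra*{\argdot; \phi}$. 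You instead work directly with $\eta$ and $\phi$: your decomposition $g(t;\phi) = L\Bar{\phi}_t - \sbra*{\eta(0) - \eta(-t)}\phi(0)$ is valid (the split of $\int_{-r}^{-t}$ into $\int_{-r}^0 - \int_{-t}^0$ is justified simply because the integrand is continuous on all of $\sbra*{-r,0}$ and $\eta$ is of bounded variation, so the full integral exists and additivity over subintervals follows), and it actually yields slightly more than the statement, namely an explicit continuous-plus-BV structure of $g\rbra*{\argdot;\phi}$. Your computation of $\int_0^t g(s;\phi)\,\dd s$ and its matching with $G(t;\phi)$ also checks out (it suffices to treat $t \in \sbra*{0,r}$, since both sides are constant in $t$ on $[r,\infty)$). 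The one step you correctly flag as needing care, the Fubini-type interchange over the triangular region $\Set*{(s,\theta)}{0 \le s \le t,\ -r \le \theta \le -s}$, is exactly the content of Lemma~\ref{lem: f(t + theta)g(t)}; applying that lemma with the Riemann integrable weight $g = \mathbf{1}_{\sbra*{0,t}}$ produces precisely your region splitting, so no new analytic input is required. In sum: your proof is self-contained and does not rely on the existence and $C^1$-regularity of classical solutions nor on the identity \eqref{eq: Eq of mild sol}, at the cost of redoing the Riemann--Stieltjes bookkeeping that the paper's shorter proof outsources to previously established machinery.
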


\begin{proof}[Another proof of Theorem~\ref{thm: g(argdot; phi)}]
Let $x \coloneqq x\rbra*{\argdot; \phi}|_{[0, \infty)}$.
Since $x\rbra*{\argdot; \phi}$ coincides with the (usual) solution of \eqref{eq: linear RFDE} under $x_0 = \phi \in C$, we have
\begin{align}
	\Dot{x}(t)
	&= \int_{-r}^0 \dd \eta(\theta) \mspace{2mu} x\rbra*{t + \theta; \phi} \notag \\
	&= \int_{-t}^0 \dd \eta(\theta) \mspace{2mu} x\rbra*{t + \theta} + \int_{-r}^{-t} \dd \eta(\theta) \mspace{2mu} \Bar{\phi}\rbra*{t + \theta} \notag \\
	&= \rbra*{ \dd \Check{\eta} * x }(t) + g\rbra*{t; \phi} \label{eq: linear RFDE with initial condition}
\end{align}
for all $t \ge 0$.
By combining the above and the continuity of $\Dot{x}|_{[0, \infty)}$, the local Riemann integrability of $g\rbra*{\argdot; \phi}$ is obtained.
By integrating the above, we have
\begin{equation*}
	x(t)
	= \phi(0) + \mathcal{V}\rbra*{ \dd \Check{\eta} * x }(t) + \rbra*{\mathcal{V}g(\argdot; \phi)}(t).
\end{equation*}
for all $t \ge 0$.
Since $\mathcal{V}\rbra*{ \dd \Check{\eta} * x } = \dd \Check{\eta} * \mathcal{V}x$ (see \cite[Theorem~3.7]{Nishiguchi_2023}), the conclusion is obtained by comparing with \eqref{eq: Eq of mild sol}.
\end{proof}

Eq.~\eqref{eq: linear RFDE with initial condition} and the definition of $g\rbra*{\argdot; \phi}$ should be compared with \cite[(2.6)]{Delfour--Manitius_1980}.

\subsection{Dependence of \texorpdfstring{$g\rbra*{\argdot; \phi}$}{g(argdot; phi)} on \texorpdfstring{$\phi$}{phi} with respect to \texorpdfstring{$L^p$}{L^p}-norm}\label{subsec: dependence of g(argdot; phi) on phi}

In this subsection, we investigate an integrability property of the function $g\rbra*{\argdot; \phi}$ for $\phi \in C$.
The following lemma will be used to obtain the integrability property.

\begin{lemma}\label{lem: f(t + theta)g(t)}
Let $f \colon \sbra*{-r, 0} \to \mathbb{R}$ be a continuous function, $\alpha \colon \sbra*{-r, 0} \to \mathbb{R}$ be a function of bounded variation, and $g \colon \sbra*{0, r} \to \mathbb{R}$ be a Riemann integrable function.
Then
\begin{equation}\label{eq: f(t + theta)g(t)}
	\int_0^r \biggl( \int_{-r}^{-t} f\rbra*{t + \theta} \mspace{2mu} \dd \alpha(\theta) \biggr) g(t) \mspace{2mu} \dd t
	= \int_{-r}^0 \biggl( \int_0^{-\theta} f\rbra*{t + \theta}g(t) \mspace{2mu} \dd t \biggr) \mspace{2mu} \dd \alpha(\theta)
\end{equation}
holds.
Here the function
\begin{equation}\label{eq: integrand in left-hand side}
	\sbra*{0, r} \ni t \mapsto \int_{-r}^{-t} f\rbra*{t + \theta} \mspace{2mu} \dd \alpha(\theta) \in \mathbb{R}
\end{equation}
is Riemann integrable, and the function
\begin{equation}\label{eq: integrand in right-hand side}
	\sbra*{-r, 0} \ni \theta \mapsto \int_0^{-\theta} f\rbra*{t + \theta}g(t) \mspace{2mu} \dd t \in \mathbb{R}
\end{equation}
is continuous.
\end{lemma}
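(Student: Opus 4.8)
The plan is to read both sides as iterated integrals of the single function $(t,\theta)\mapsto f\rbra*{t+\theta}g(t)$ over the triangular region $\Delta := \Set*{(t,\theta)}{-r\le\theta\le 0,\ 0\le t\le-\theta}$, and to establish the asserted equality as a Fubini-type interchange between the Riemann integration in $t$ and the Riemann--Stieltjes integration in $\theta$. I would first settle the two regularity claims, then prove the identity for continuous $g$, and finally extend it to Riemann integrable $g$ by an $L^1$-approximation argument.

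For the right-hand integrand \eqref{eq: integrand in right-hand side}, set $\Psi(\theta) := \int_0^{-\theta} f\rbra*{t+\theta}g(t)\,\dd t$. As $f$ is continuous on the compact interval $\sbra*{-r,0}$, it is bounded and uniformly continuous, while $g$ is bounded by Riemann integrability; writing $\Psi(\theta')-\Psi(\theta)$ as the change of the integrand plus the change coming from the upper limit, and bounding the two pieces by the modulus of continuity of $f$ and by $\lVert f\rVert_\infty\,\lVert g\rVert_\infty\,\abs*{\theta'-\theta}$ respectively, gives the continuity of $\Psi$; hence the outer Riemann--Stieltjes integral on the right exists. For the left-hand integrand \eqref{eq: integrand in left-hand side}, $\Phi(t) := \int_{-r}^{-t} f\rbra*{t+\theta}\,\dd\alpha(\theta)$ has the same structure as the forcing term $g\rbra*{\argdot;\phi}$ introduced before Theorem~\ref{thm: g(argdot; phi)} (with $f$ in the role of $\Bar{\phi}$ and $\alpha$ in that of $\eta$): after the substitution $u=-\theta$ it takes the form $\int_t^r \dd\Check{\alpha}(u)\,f(t-u)$ with $\Check{\alpha}(u):=-\alpha(-u)$, i.e.\ a Riemann--Stieltjes convolution, which (exactly as for $g\rbra*{\argdot;\phi}$) is a sum of a continuous function and a function of bounded variation and is therefore Riemann integrable on $\sbra*{0,r}$. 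Lemma~\ref{lem: sharp estimate of RS integral} moreover gives the uniform bound $\abs*{\Phi(t)}\le\lVert f\rVert_\infty\,\Var(\alpha)$.

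Suppose first that $g$ is continuous. Then $(t,\theta)\mapsto f\rbra*{t+\theta}g(t)$ is continuous, hence uniformly continuous, on the compact triangle $\Delta$. I would approximate both iterated integrals by a common family of double Riemann--Stieltjes sums $\sum_{i,j} f\rbra*{t_i+\theta_j}g(t_i)\rbra*{t_i-t_{i-1}}\rbra*{\alpha(\theta_j)-\alpha(\theta_{j-1})}$, the sum ranging over the lattice points lying in $\Delta$, taken over partitions of $\sbra*{0,r}$ and $\sbra*{-r,0}$ whose mesh tends to $0$. Uniform continuity of the integrand, together with the bounds supplied by the length $r$ and the total variation $\Var(\alpha)$, shows that each iterated integral is the limit of these sums; the discrepancy between the staircase index set and $\Delta$ is bounded by $\lVert f\rVert_\infty\,\lVert g\rVert_\infty$ times the product of the $t$-mesh and $\Var(\alpha)$, which vanishes as the mesh shrinks. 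This proves the identity for continuous $g$. To pass to a general Riemann integrable $g$, choose continuous $g_n\to g$ in $L^1(0,r)$. The left-hand side is stable under this limit because $\abs*{\int_0^r\Phi(t)\rbra*{g-g_n}(t)\,\dd t}\le\lVert\Phi\rVert_\infty\,\lVert g-g_n\rVert_{L^1}$ by the bound from the previous step, while on the right-hand side $\abs*{\int_0^{-\theta}f\rbra*{t+\theta}\rbra*{g-g_n}(t)\,\dd t}\le\lVert f\rVert_\infty\,\lVert g-g_n\rVert_{L^1}$ uniformly in $\theta$, so Lemma~\ref{lem: sharp estimate of RS integral} bounds the difference of the outer Riemann--Stieltjes integrals by $\lVert f\rVert_\infty\,\Var(\alpha)\,\lVert g-g_n\rVert_{L^1}$. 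Both sides therefore converge to their values at $g$, and the identity passes to the limit.

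The main obstacle lies in the continuous-$g$ step: unlike a rectangular Fubini theorem, the inner limit $-t$ depends on the outer variable, so the double-sum argument must control the cells straddling the diagonal $\theta=-t$. A possible atom of $\alpha$ on this diagonal is harmless precisely because the $t$-width of each straddling cell shrinks with the partition, so that the whole boundary contribution is dominated by the $t$-mesh times $\Var(\alpha)$. A cleaner alternative to the double-sum computation would be to Jordan-decompose $\alpha$ into monotone parts, identify each Riemann--Stieltjes integral with a Lebesgue--Stieltjes integral (legitimate since the inner integrand is continuous), and apply the Fubini--Tonelli theorem on $\sbra*{0,r}\times\sbra*{-r,0}$ with the product of Lebesgue measure and $\mu_\alpha$; the diagonal $\Set*{(t,\theta)}{\theta=-t}$ is then a product-null set because $\mu_\alpha$ has at most countably many atoms, and integrability is immediate from $\abs*{f\rbra*{t+\theta}g(t)}\le\lVert f\rVert_\infty\,\lVert g\rVert_\infty$ and the finiteness of $r\,\Var(\alpha)$.
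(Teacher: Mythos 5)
Your proposal is correct, but it follows a genuinely different route from the paper's. The paper never integrates over the triangle directly: it extends $f$ to $[-r, r]$ by the constant value $f(0)$ on $\sbra*{0, r}$, replaces $g$ by the Lipschitz integrator $G = \mathcal{V}g$, and applies the rectangular interchange theorem (Theorem~\ref{thm: Fubini thm for RS integrals of scalar-valued functions}) over all of $\sbra*{0, r} \times \sbra*{-r, 0}$; the identity over the complementary triangle $\Set*{(t, \theta)}{\theta \ge -t}$ is then verified in closed form, because there the extended $f$ is identically $f(0)$ and both sides reduce, after an integration by parts of $\int_{-r}^0 \rbra[\big]{\int_{-\theta}^r g(s) \mspace{2mu} \dd s} \mspace{2mu} \dd \alpha(\theta)$, to $f(0)\int_0^r \sbra*{\alpha(0) - \alpha\rbra*{-t}}g(t) \mspace{2mu} \dd t$. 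Subtracting yields \eqref{eq: f(t + theta)g(t)} for Riemann integrable $g$ in one step, with no density argument and no diagonal analysis: a possible atom of $\alpha$ on $\theta = -t$ simply never enters. Your direct double-sum argument buys self-containedness (you do not need Theorem~\ref{thm: Fubini thm for RS integrals of scalar-valued functions}) and makes the triangular geometry explicit, but it costs two extra layers that the paper avoids: the boundary-cell estimate along the diagonal, which you handle correctly and which becomes cleanest if you align the partitions so that each $-t_i$ is a $\theta$-partition point (then the inner Riemann--Stieltjes sums for $\Phi\rbra*{t_i}$ converge uniformly in $i$ against the single budget $\Var(\alpha)$), and the $L^1$-density passage from continuous to Riemann integrable $g$, which is sound because both sides are $\norm*{\argdot}_{L^1\sbra*{0, r}}$-continuous in $g$ via the uniform bounds $\abs*{\Phi(t)} \le \sup\abs*{f} \cdot \Var(\alpha)$ and $\abs[\big]{\int_0^{-\theta} f\rbra*{t + \theta}h(t) \mspace{2mu} \dd t} \le \sup\abs*{f} \cdot \norm*{h}_{L^1\sbra*{0, r}}$. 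Your regularity claims for \eqref{eq: integrand in left-hand side} and \eqref{eq: integrand in right-hand side} match what the paper extracts from its own computation. The measure-theoretic alternative you sketch (Jordan decomposition, Lebesgue--Stieltjes measures, Fubini--Tonelli with the diagonal product-null) is also valid, though it leaves the Riemann--Stieltjes framework the paper deliberately works within.
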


\begin{proof}
We extend the domain of definition of $f$ to $[-r, r]$ by defining $f(t) \coloneqq f(0)$ for $t \in \sbra*{0, r}$.
Then the extended function $f \colon [-r, r] \to \mathbb{R}$ is continuous.
Let $G \colon \sbra*{0, r} \to \mathbb{R}$ be a Lipschitz continuous function defined by
\begin{equation*}
	G(t) \coloneqq \int_0^t g(s) \mspace{2mu} \dd s
	\mspace{25mu}
	(t \in \sbra*{0, r}).
\end{equation*}
Then
\begin{equation*}
	\int_0^r \rbra*{\int_{-r}^0 f\rbra*{t + \theta} \mspace{2mu} \dd \alpha(\theta)} \mspace{2mu} \dd G(t)
	= \int_{-r}^0 \rbra*{\int_0^r f\rbra*{t + \theta} \mspace{2mu} \dd G(t)} \mspace{2mu} \dd \alpha(\theta)
\end{equation*}
holds by applying Theorem~\ref{thm: Fubini thm for RS integrals of scalar-valued functions}.
By the continuity of the functions
\begin{align*}
	\sbra*{0, r} \ni t &\mapsto \int_{-r}^0 f\rbra*{t + \theta} \mspace{2mu} \dd \alpha(\theta) \in \mathbb{R}, \\
	\sbra*{0, r} \ni t &\mapsto f\rbra*{t + \theta} \in \mathbb{R},
\end{align*}
the above equality becomes (e.g., see \cite[Theorem~A.20]{Nishiguchi_2023})
\begin{equation*}
	\int_0^r \rbra*{\int_{-r}^0 f\rbra*{t + \theta} \mspace{2mu} \dd \alpha(\theta)} g(t) \mspace{2mu} \dd t
	= \int_{-r}^0 \rbra*{\int_0^r f\rbra*{t + \theta} g(t) \mspace{2mu} \dd t} \mspace{2mu} \dd \alpha(\theta).
\end{equation*}
Therefore, \eqref{eq: f(t + theta)g(t)} is obtained by showing
\begin{equation}\label{eq: f(t + theta)g(t), variant}
	\int_0^r \rbra*{\int_{-t}^0 f\rbra*{t + \theta} \mspace{2mu} \dd \alpha(\theta)} g(t) \mspace{2mu} \dd t
	= \int_{-r}^0 \rbra*{\int_{-\theta}^r f\rbra*{t + \theta} g(t) \mspace{2mu} \dd t} \mspace{2mu} \dd \alpha(\theta).
\end{equation}
We now show that the equality~\eqref{eq: f(t + theta)g(t), variant} holds.
The left-hand side and the right-hand side of \eqref{eq: f(t + theta)g(t), variant} are calculated as
\begin{equation*}
	f(0)\int_0^r \sbra*{\alpha(0) - \alpha\rbra*{-t}}g(t) \mspace{2mu} \dd t,
	\mspace{20mu}
	f(0)\int_{-r}^0 \rbra*{\int_{-\theta}^r g(s) \mspace{2mu} \dd s} \mspace{2mu} \dd \alpha(\theta),
\end{equation*}
respectively.
Here the integration by parts formula for Riemann--Stieltjes integrals yields
\begin{align*}
	\int_{-r}^0 \rbra*{\int_{-\theta}^r g(s) \mspace{2mu} \dd s} \mspace{2mu} \dd \alpha(\theta)
	&= \sbra*{ \int_{-\theta}^r g(s) \mspace{2mu} \dd s \cdot \alpha(\theta) }_{-r}^0 - \int_{-r}^0 g(-\theta) \alpha(\theta) \mspace{2mu} \dd \theta \\
	&= \int_0^r g(s)\sbra*{\alpha(0) - \alpha\rbra*{-s}} \mspace{2mu} \dd s.
\end{align*}
This shows that equality~\eqref{eq: f(t + theta)g(t), variant} holds.
The above argument also shows the Riemann integrability of \eqref{eq: integrand in left-hand side} and the continuity of \eqref{eq: integrand in right-hand side}.
\end{proof}

The following is a key lemma for the proof of Theorem~\ref{thm: main result}.

\begin{lemma}[cf.\ \cite{Delfour--Manitius_1980}]\label{lem: L^p norm of g_phi}
Let $p \in [1, \infty)$ be given.
Then for any $\phi \in C$,
\begin{equation*}
	\norm*{g_\phi}_{L^p\sbra*{0, r}}
	\le \Var(\eta)\norm*{\phi}_{L^p\sbra*{-r, 0}}
\end{equation*}
holds.
\end{lemma}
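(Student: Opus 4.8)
The plan is to estimate the $L^p$ norm of $g_\phi$ by duality and then to exploit the Fubini-type identity of Lemma~\ref{lem: f(t + theta)g(t)}. First I would record that, since $\phi \in C$, one has $\Bar{\phi}(t+\theta) = \phi(t+\theta)$ whenever $t \in \sbra*{0,r}$ and $\theta \in \sbra*{-r,-t}$, so that $g_\phi(t) = \int_{-r}^{-t}\dd\eta(\theta)\,\phi(t+\theta)$ for $t \in \sbra*{0,r}$. Writing $q$ for the conjugate exponent of $p$, I would use the $L^p$--$L^q$ duality for $\mathbb{K}^n$-valued functions,
\[
\norm*{g_\phi}_{L^p\sbra*{0,r}} = \sup\Set*{\abs*{\textstyle\int_0^r \blf{g_\phi(t)}{\psi(t)}\,\dd t}}{\psi \in C\rbra*{\sbra*{0,r},\mathbb{K}^n},\ \norm*{\psi}_{L^q\sbra*{0,r}} \le 1},
\]
restricting the test functions to continuous ones; for $1 < p < \infty$ this is immediate, while for $p = 1$ (so $q = \infty$) one checks that continuous $\psi$ with $\norm*{\psi}_{L^\infty} \le 1$ still recover the $L^1$-norm of the integrable function $g_\phi$.

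The key step is to apply Lemma~\ref{lem: f(t + theta)g(t)} entrywise. For a continuous test function $\psi$, applying the identity with $f = \phi_j$, $\alpha = \eta_{ij}$, and $g = \psi_i$ and summing over $i,j$ converts the $t$-integral of $\blf{g_\phi(t)}{\psi(t)}$ into
\[
\int_0^r \blf{g_\phi(t)}{\psi(t)}\,\dd t = \int_{-r}^0 \sum_{i,j}\rbra*{\int_0^{-\theta}\phi_j(t+\theta)\,\psi_i(t)\,\dd t}\dd\eta_{ij}(\theta),
\]
which is a Riemann--Stieltjes integral against $\eta$ of the continuous matrix-valued function $\theta \mapsto A(\theta) \coloneqq \int_0^{-\theta}\psi(t)\,\phi(t+\theta)^{\!\top}\,\dd t$. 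I would then bound this integral by the sharp Riemann--Stieltjes estimate in its bilinear (trace-pairing) form, the analogue of Lemma~\ref{lem: sharp estimate of RS integral}, namely $\abs*{\int_{-r}^0 \operatorname{tr}\!\rbra*{A(\theta)^{\!\top}\,\dd\eta(\theta)}} \le \int_{-r}^0 \norm*{A(\theta)}_{*}\,\dd V_\eta(\theta)$, where $\norm*{A(\theta)}_{*}$ is the nuclear norm, dual to the operator norm underlying $V_\eta$. Since $A(\theta)$ is an integral of rank-one matrices, $\norm*{A(\theta)}_{*} \le \int_0^{-\theta}\abs*{\psi(t)}\,\abs*{\phi(t+\theta)}\,\dd t$, and Hölder's inequality together with the substitution $s = t + \theta$ gives
\[
\norm*{A(\theta)}_{*} \le \rbra*{\int_\theta^0 \abs*{\phi(s)}^p\,\dd s}^{1/p}\rbra*{\int_0^{-\theta}\abs*{\psi(t)}^q\,\dd t}^{1/q} \le \norm*{\phi}_{L^p\sbra*{-r,0}}.
\]
Integrating this against $\dd V_\eta$ yields the bound $\Var(\eta)\,\norm*{\phi}_{L^p\sbra*{-r,0}}$, and taking the supremum over $\psi$ finishes the proof.

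The step I expect to be the main obstacle is the final norm bookkeeping: to obtain the clean constant $\Var(\eta)$ with no spurious dimensional factor, one must pair the matrix-valued integrand with $\dd\eta$ using the correct duality (nuclear norm against the operator norm that underlies $V_\eta$) and verify the corresponding sharp estimate by the same partition-and-limit argument as in Lemma~\ref{lem: sharp estimate of RS integral}. Conceptually the whole inequality is just Minkowski's integral inequality applied to $g_\phi(t) = \int_{-r}^0 \mathbf{1}_{\sbra*{0,-\theta}}(t)\,\dd\eta(\theta)\,\phi(t+\theta)$ with respect to the positive Lebesgue--Stieltjes measure $\dd V_\eta$; indeed, the pointwise estimate $\abs*{g_\phi(t)} \le \int_{-r}^{-t}\abs*{\phi(t+\theta)}\,\dd V_\eta(\theta)$ coming from Lemma~\ref{lem: sharp estimate of RS integral} followed by Minkowski gives the same bound directly, and the duality route above is precisely the way to carry out this Minkowski argument while staying inside the Riemann--Stieltjes framework furnished by Lemma~\ref{lem: f(t + theta)g(t)}.
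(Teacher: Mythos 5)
Your proof is correct in substance, but it takes a genuinely different route from the paper's. The paper argues directly: it first applies Lemma~\ref{lem: sharp estimate of RS integral} pointwise to get $\abs*{g(t; \phi)} \le \int_{-r}^{-t} \abs*{\phi\rbra*{t + \theta}} \mspace{2mu} \dd V_\eta(\theta)$, thereby reducing everything to scalars, and then proves the resulting Minkowski-type integral inequality \eqref{eq: key inequality} by the classical internal trick (multiply by $M(t) = \rbra*{\cdots}^{p-1}$, interchange the integrals via Lemma~\ref{lem: f(t + theta)g(t)}, apply H\"{o}lder, and divide by $I^{p/q}$). You instead dualize externally against continuous test functions $\psi$ with $\norm*{\psi}_{L^q} \le 1$, keep the matrix structure, and interchange via Lemma~\ref{lem: f(t + theta)g(t)} entrywise before estimating; this is the standard duality proof of Minkowski's integral inequality transplanted into the Riemann--Stieltjes setting, and as you note the two are morally the same computation arranged differently. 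What your route buys is that it avoids having to verify Riemann integrability of the auxiliary function $M(t)$ (which the paper must do explicitly); what it costs is that you need a bilinear, matrix-valued variant of Lemma~\ref{lem: sharp estimate of RS integral} that the paper never states, plus the $p = 1$ density argument for continuous test functions. The one point that genuinely needs care --- and which you correctly flag --- is the norm bookkeeping: since the paper fixes an arbitrary norm $\abs*{\argdot}$ on $\mathbb{K}^n$ and $V_\eta$ is built from the induced operator norm, the trace-duality estimate gives $\norm*{\psi(t)\phi\rbra*{t+\theta}^{\top}}_{*} \le \abs*{\psi(t)}_{*}\abs*{\phi\rbra*{t+\theta}}$ with the \emph{dual} vector norm on $\psi$, so the test functions must be normalized in $L^q$ with respect to that dual norm for the constant to come out as exactly $\Var(\eta)$ with no dimensional factor. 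With that adjustment made consistently throughout (duality pairing, H\"{o}lder step, and the sup defining $\norm*{g_\phi}_{L^p}$), your argument closes.
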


\begin{proof}
From Lemma~\ref{lem: sharp estimate of RS integral},
\begin{equation*}
	\abs*{g(t; \phi)}
	= \abs*{\int_{-r}^{-t} \dd \eta(\theta) \mspace{2mu} \phi\rbra*{t + \theta}}
	\le \int_{-r}^{-t} \abs*{\phi\rbra*{t + \theta}} \mspace{2mu} \dd V_\eta(\theta)
\end{equation*}
holds for all $t \in \sbra*{0, r}$.
Therefore, we have
\begin{equation*}
	\rbra*{ \int_0^r \abs*{g(t; \phi)}^p \mspace{2mu} \dd t }^{1/p}
	\le \sbra*{ \int_0^r \biggl( \int_{-r}^{-t} \abs*{\phi\rbra*{t + \theta}} \mspace{2mu} \dd V_\eta(\theta) \biggr)^p \mspace{2mu} \dd t }^{1/p}.
\end{equation*}
We now show that
\begin{equation}\label{eq: key inequality}
	\sbra*{ \int_0^r \biggl( \int_{-r}^{-t} \abs*{\phi\rbra*{t + \theta}} \mspace{2mu} \dd V_\eta(\theta) \biggr)^p \mspace{2mu} \dd t }^{1/p}
	\le \int_{-r}^0 \rbra*{ \int_0^{-\theta} \abs*{\phi\rbra*{t + \theta}}^p \mspace{2mu} \dd t }^{1/p} \mspace{2mu} \dd V_\eta(\theta)
\end{equation}
holds.
Let $I$ be the left-hand side of \eqref{eq: key inequality}.
The inequality~\eqref{eq: key inequality} with $p = 1$ trivially holds from Lemma~\ref{lem: f(t + theta)g(t)}.
Therefore, we only have to consider the case $p \in \rbra*{1, \infty}$ and $I \ne 0$.
Let
\begin{equation*}
	M(t) \coloneqq \rbra*{ \int_{-r}^{-t} \abs*{\phi\rbra*{t + \theta}} \mspace{2mu} \dd V_\eta(\theta) }^{p - 1}
\end{equation*}
for each $t \in \sbra*{0, r}$.
Since
\begin{equation*}
	\int_{-r}^{-t} \abs*{\phi\rbra*{t + \theta}} \mspace{2mu} \dd V_\eta(\theta)
	= \int_{-r}^0 \abs*{\Bar{\phi}\rbra*{t + \theta}} \mspace{2mu} \dd V_\eta(\theta) - \abs*{\phi(0)} \cdot \sbra*{V_\eta(0) - V_\eta\rbra*{-t}},
\end{equation*}
the left-hand side is a Riemann integrable function of $t \in \sbra*{0, r}$.
Therefore, the function $\sbra*{0, r} \ni t \mapsto M(t) \in \mathbb{R}$ is also Riemann integrable.
Then we have
\begin{align*}
	I^p
	&= \int_0^r \rbra*{ \int_{-r}^{-t} \abs*{\phi\rbra*{t + \theta}} \mspace{2mu} \dd V_\eta(\theta) } M(t) \mspace{2mu} \dd t \\
	&= \int_{-r}^0 \rbra*{ \int_0^{-\theta} \abs*{\phi\rbra*{t + \theta}}M(t) \mspace{2mu} \dd t } \mspace{2mu} \dd V_\eta(\theta)
\end{align*}
from Lemma~\ref{lem: f(t + theta)g(t)}.
By applying H\"{o}lder's inequality,
\begin{align*}
	\int_0^{-\theta} \abs*{\phi\rbra*{t + \theta}}M(t) \mspace{2mu} \dd t
	&\le \rbra*{ \int_0^{-\theta} \abs*{\phi\rbra*{t + \theta}}^p \mspace{2mu} \dd t }^{1/p} \cdot \rbra*{ \int_0^{-\theta} M(t)^q \mspace{2mu} \dd t }^{1/q} \\
	&\le \rbra*{ \int_0^{-\theta} \abs*{\phi\rbra*{t + \theta}}^p \mspace{2mu} \dd t }^{1/p} \cdot \rbra*{ \int_0^r M(t)^q \mspace{2mu} \dd t }^{1/q}
\end{align*}
hold, where $q \in \rbra*{1, \infty}$ is the exponent conjugate to $p$.
Since $q = p/\rbra*{p - 1}$, the constant in the right-hand side is calculated as
\begin{equation*}
	\rbra*{ \int_0^r M(t)^q \mspace{2mu} \dd t }^{1/q}
	= \sbra*{ \int_0^r \rbra*{ \int_{-r}^{-t} \abs*{\phi\rbra*{t + \theta}} \mspace{2mu} \dd V_\eta(\theta) }^p \mspace{2mu} \dd t }^{1/q}
	= I^{p/q}.
\end{equation*}
Thus, the above argument shows
\begin{equation*}
	I^p
	\le I^{p/q} \cdot \int_{-r}^0 \rbra*{ \int_0^{-\theta} \abs*{\phi\rbra*{t + \theta}}^p \mspace{2mu} \dd t }^{1/p} \mspace{2mu} \dd V_\eta(\theta),
\end{equation*}
which yields \eqref{eq: key inequality}.
Since the right-hand side of \eqref{eq: key inequality} is estimated by $\Var(\eta)\norm*{\phi}_{L^p\sbra*{-r, 0}}$, the inequality is obtained.
\end{proof}

\begin{remark}
Lemma~\ref{lem: L^p norm of g_phi} for the case of $p = 2$ has been discussed in \cite[Theorem~2.1]{Delfour--Manitius_1980} with a different argument of the proof and with a different definition of $g\rbra*{\argdot; \phi}$.
\end{remark}

\subsection{Proof of Theorem~\ref{thm: main result} for \texorpdfstring{$1 < p < \infty$}{1 < p < infty}}

In this subsection, we give the proof of Theorem~\ref{thm: main result} for $1 < p < \infty$.

\begin{proof}[Proof of Theorem~\ref{thm: main result} for $1 < p < \infty$]
Let $\phi \in \mathcal{M}^p$ be given.
By showing
\begin{equation*}
	\Dot{G}\rbra*{\argdot; \phi} \in \mathcal{L}^p_\mathrm{loc}\rbra*{[0, \infty), \mathbb{K}^n},
\end{equation*}
the statements of Theorem~\ref{thm: main result} for $1 < p < \infty$ are obtained with $f\rbra*{\argdot; \phi} = \Dot{G}\rbra*{\argdot; \phi}$ as the proof of Theorem~\ref{thm: DE satisfied by mild sol}.

\textbf{Step 1: Definition of $G\rbra*{\argdot; \psi}$ for $\psi \in L^p\rbra*{\sbra*{-r, 0}, \mathbb{K}^n}$.}
Since
\begin{equation*}
	G\rbra*{t; \phi}
	= \int_{-r}^0 \dd \eta(\theta) \mspace{2mu} \rbra*{\int_\theta^0 \phi(s) \mspace{2mu} \dd s} + \int_{-r}^{-t} \dd \eta(\theta) \mspace{2mu} \rbra*{\int_0^{t + \theta} \phi(s) \mspace{2mu} \dd s}
\end{equation*}
holds for all $t \in \sbra*{0, r}$ and $G\rbra*{\argdot; \phi}$ is constant on $[r, \infty)$, one can define $G\rbra*{\argdot; \psi} \colon [0, \infty) \to \mathbb{K}^n$ for $\psi \in L^p \coloneqq L^p\rbra*{\sbra*{-r, 0}, \mathbb{K}^n}$ in the same way.
Then the definition yields that
\begin{equation}\label{eq: estimate for G(argdot; psi)}
	\abs*{G\rbra*{t; \psi}}
	\le 2\Var(\eta)\norm*{\psi}_{L^1\sbra*{-r, 0}}
\end{equation}
holds for all $t \in [0, \infty)$ and all $\psi \in L^p$.

\textbf{Step 2: Extension of $g\rbra*{\argdot; \psi}$ for $\psi \in C$.}
From Lemma~\ref{lem: L^p norm of g_phi}, $\rbra*{ C, \norm*{\argdot}_{L^p\sbra*{-r, 0}} } \ni \psi \mapsto g_\psi \in L^p\rbra*{\sbra*{0, r}, \mathbb{K}^n}$ is a bounded linear operator.
Since the subset $C$ is dense in $L^p$, there exists a unique bounded linear operator $T \colon L^p \to L^p\rbra*{\sbra*{0, r}, \mathbb{K}^n}$ such that
\begin{equation*}
	T\psi = g_\psi
\end{equation*}
holds for all $\psi \in C$.

\textbf{Step 3: Derivative of $G\rbra*{\argdot; \psi}|_{\sbra*{0, r}}$ for $\psi \in L^p$.}
For each given $\psi \in L^p$, we choose a sequence $\rbra*{\psi_j}_{j = 1}^\infty$ in $C$ so that $\norm*{\psi - \psi_j}_{L^p\sbra*{-r, 0}} \to 0$ as $j \to \infty$.
Then for all $t \in \sbra*{0, r}$, we have
\begin{equation*}
	G\rbra*{t; \psi}
	= \lim_{j \to \infty} G\rbra*{t; \psi_j}
	= \lim_{j \to \infty} \int_0^t g_{\psi_j}(s) \mspace{2mu} \dd s
\end{equation*}
from \eqref{eq: estimate for G(argdot; psi)} and Theorem~\ref{thm: g(argdot; phi)}.
By combining this and $\norm*{T\psi}_{L^p\sbra*{0, r}} \le \norm*{T}\norm*{\psi}_{L^p\sbra*{-r, 0}}$ for all $\psi \in L^p$, 
\begin{equation*}
	G\rbra*{t; \psi}
	= \lim_{j \to \infty} \int_0^t \rbra*{T\psi_j}(s) \mspace{2mu} \dd s
	= \int_0^t \rbra*{T\psi}(s) \mspace{2mu} \dd s
\end{equation*}
is concluded.

\textbf{Step 4: Conclusion.}
Step 3 shows $\Dot{G}\rbra*{\argdot; \phi} \in \mathcal{L}^p_\mathrm{loc}\rbra*{[0, \infty), \mathbb{K}^n}$.
This completes the proof.
\end{proof}

\section{Discussion}\label{sec: discussion}

We compare Theorem~\ref{thm: main result} with results obtained by Delfour and Manitius~\cite{Delfour--Manitius_1980}.
In that paper, the authors interpret the linear RFDE~\eqref{eq: linear RFDE} under an initial condition $x_0 = \phi \in \mathcal{M}^2$ as a differential equation
\begin{equation}\label{eq: Delfour--Manitius}
	\Dot{x}(t) = \int_{-t}^0 \dd \eta(\theta) \mspace{2mu} x\rbra*{t + \theta} +
	\begin{cases}
		(H\phi)\rbra*{-t} & (\text{a.e.\ $t \in \sbra*{0, r}$}) \\
		0 & (t \in (r, \infty)).
	\end{cases}
\end{equation}
Here $H\phi$ is an $L^2$-function determined by $\phi$ with the density argument which appeared at the proof of Theorem~\ref{thm: main result} for $1 < p < \infty$.
See \cite[Subsection~2.1]{Delfour--Manitius_1980} for the detail.
Theorem~\ref{thm: main result} reveals a connection between the differential equation~\eqref{eq: Delfour--Manitius} used in \cite{Delfour--Manitius_1980} and the mild solutions introduced in \cite{Nishiguchi_2023}.

\appendix

\section{Theorems on Riemann--Stieltjes integrals}\label{sec: RS integral}

\subsection{A result on iterated Riemann--Stieltjes integrals}

The following is a result on iterated Riemann--Stieltjes integrals (e.g., see \cite[Theorem~15a]{Widder_1941}, \cite{Hewitt_1960}).
See also \cite[Theorem~3.8]{Nishiguchi_2023} for a related result.

\begin{theorem}\label{thm: Fubini thm for RS integrals of scalar-valued functions}
Let $\sbra*{a, b}$, $[c, d]$ be closed and bounded intervals of $\mathbb{R}$, and $\alpha \colon \sbra*{a, b} \to \mathbb{K}$, $\beta \colon [c, d] \to \mathbb{K}$ be functions of bounded variation.
If $f \colon \sbra*{a, b} \times [c, d] \to \mathbb{K}$ is a continuous function, then
\begin{equation}\label{eq: iterated RS integrals}
	\int_c^d \biggl( \int_a^b f(x, y) \mspace{2mu} \dd \alpha(x) \biggr) \mspace{2mu} \dd \beta(y)
	= \int_a^b \biggl( \int_c^d f(x, y) \mspace{2mu} \dd \beta(y) \biggr) \mspace{2mu} \dd \alpha(x)
\end{equation}
holds.
\end{theorem}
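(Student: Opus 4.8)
The plan is to realize both iterated integrals as limits of a single double Riemann--Stieltjes sum, so that they are forced to coincide. First I would check that both sides of~\eqref{eq: iterated RS integrals} are well defined. Setting $F(y) \coloneqq \int_a^b f(x, y) \mspace{2mu} \dd \alpha(x)$, this integral exists for each fixed $y$ since $x \mapsto f(x, y)$ is continuous and $\alpha$ is of bounded variation. To integrate $F$ against $\beta$ I need $F$ to be continuous, and the scalar case of Lemma~\ref{lem: sharp estimate of RS integral} gives
\[
	\abs*{F(y) - F(y')}
	\le \int_a^b \abs*{f(x, y) - f(x, y')} \mspace{2mu} \dd V_\alpha(x)
	\le \Var(\alpha) \cdot \sup_{x \in \sbra*{a, b}} \abs*{f(x, y) - f(x, y')},
\]
whose right-hand side tends to $0$ as $y' \to y$ by the uniform continuity of $f$ on the compact rectangle $\sbra*{a, b} \times [c, d]$. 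The symmetric argument shows that $G(x) \coloneqq \int_c^d f(x, y) \mspace{2mu} \dd \beta(y)$ is continuous in $x$, so both iterated integrals in~\eqref{eq: iterated RS integrals} exist.

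Next I would fix partitions $a = x_0 < \dots < x_m = b$ and $c = y_0 < \dots < y_n = d$ together with tags $\xi_i \in \sbra*{x_{i - 1}, x_i}$ and $\eta_j \in \sbra*{y_{j - 1}, y_j}$, and consider the double Riemann--Stieltjes sum
\[
	S \coloneqq \sum_{i = 1}^m \sum_{j = 1}^n f\rbra*{\xi_i, \eta_j} \sbra*{\alpha\rbra*{x_i} - \alpha\rbra*{x_{i - 1}}} \sbra*{\beta\rbra*{y_j} - \beta\rbra*{y_{j - 1}}}.
\]
Since finite summation is commutative, $S$ can be regrouped either by summing over $i$ first or over $j$ first: the former produces an outer Riemann--Stieltjes sum (in $y$, against $\beta$) whose summands approximate $F\rbra*{\eta_j}$, and the latter an outer sum (in $x$, against $\alpha$) whose summands approximate $G\rbra*{\xi_i}$.

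The crux is to show that $S$ converges to $\int_c^d F \mspace{2mu} \dd \beta$ as both meshes shrink. Let $\omega(\delta) \coloneqq \sup\Set*{\abs*{f(x, y) - f(x', y)}}{\abs*{x - x'} \le \delta, \, y \in [c, d]}$ denote the modulus of continuity of $f$ in its first variable, uniform over the second; then $\omega(\delta) \to 0$ as $\delta \to 0^+$ since $f$ is uniformly continuous on the compact rectangle. Writing $F_P(y) \coloneqq \sum_{i = 1}^m f\rbra*{\xi_i, y} \sbra*{\alpha\rbra*{x_i} - \alpha\rbra*{x_{i - 1}}}$ and applying Lemma~\ref{lem: sharp estimate of RS integral} on each subinterval $\sbra*{x_{i - 1}, x_i}$, summation yields $\sup_{y} \abs*{F(y) - F_P(y)} \le \Var(\alpha) \cdot \omega\rbra*{\delta_P}$, where $\delta_P$ is the mesh of $P$; hence $F_P \to F$ uniformly in $y$ as $\delta_P \to 0$. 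It follows that
\[
	\abs*{S - \sum_{j = 1}^n F\rbra*{\eta_j} \sbra*{\beta\rbra*{y_j} - \beta\rbra*{y_{j - 1}}}}
	\le \Var(\alpha) \Var(\beta) \cdot \omega\rbra*{\delta_P},
\]
while $\sum_{j = 1}^n F\rbra*{\eta_j} \sbra*{\beta\rbra*{y_j} - \beta\rbra*{y_{j - 1}}} \to \int_c^d F \mspace{2mu} \dd \beta$ as the mesh of $Q$ tends to $0$, because $F$ is continuous. Since the first bound depends only on $\delta_P$ and the second only on the mesh of $Q$, letting both meshes tend to $0$ gives $S \to \int_c^d F \mspace{2mu} \dd \beta$, the left-hand side of~\eqref{eq: iterated RS integrals}. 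The identical argument with the two variables interchanged shows that the same $S$ converges to $\int_a^b G \mspace{2mu} \dd \alpha$, the right-hand side, and equating the two limits proves the identity. The principal technical point is the uniform (in the parameter $y$) control of the inner Riemann--Stieltjes sums, which is supplied precisely by the uniform continuity of $f$ on the compact rectangle together with the sharp bound of Lemma~\ref{lem: sharp estimate of RS integral}.
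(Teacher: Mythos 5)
Your proof is correct, and it is genuinely different from the route the paper takes: the paper does not prove Theorem~\ref{thm: Fubini thm for RS integrals of scalar-valued functions} at all, but defers to \cite{Widder_1941} and \cite{Hewitt_1960}, explicitly noting that the proof mentioned in \cite{Hewitt_1960} rests on the Stone--Weierstrass theorem (one approximates $f$ uniformly by finite sums $\sum_k g_k(x)h_k(y)$ of products, for which \eqref{eq: iterated RS integrals} is immediate, and passes to the limit using the bound $\abs*{\int h \, \dd\gamma} \le \norm*{h}_\infty \Var(\gamma)$). You instead realize both iterated integrals as the common limit of a single doubly-tagged Riemann--Stieltjes sum $S$, with the key quantitative input being the uniform estimate $\sup_y \abs*{F(y) - F_P(y)} \le \Var(\alpha)\,\omega\rbra*{\delta_P}$, which is exactly what Lemma~\ref{lem: sharp estimate of RS integral} (in its scalar form, together with the additivity of the total variation over subintervals, i.e.\ \eqref{eq: estimate for total variation function}) delivers; your preliminary verification that $F$ and $G$ are continuous, so that both sides of \eqref{eq: iterated RS integrals} exist, is also necessary and correctly carried out. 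What your approach buys is a self-contained, elementary argument that uses only machinery already present in Section~\ref{subsec: local absolute continuity} of the paper and avoids importing Stone--Weierstrass; what the cited approach buys is brevity and a cleaner conceptual reduction to the separable case. One could quibble that your inequality $\abs[\big]{S - \sum_j F\rbra*{\eta_j}\sbra*{\beta\rbra*{y_j} - \beta\rbra*{y_{j-1}}}} \le \Var(\alpha)\Var(\beta)\,\omega\rbra*{\delta_P}$ deserves the one-line justification $\sum_j \abs*{F_P\rbra*{\eta_j} - F\rbra*{\eta_j}}\abs*{\beta\rbra*{y_j} - \beta\rbra*{y_{j-1}}} \le \Var(\alpha)\,\omega\rbra*{\delta_P} \cdot \Var(\beta)$, but this is routine and does not constitute a gap.
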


We note that the functions
\begin{align*}
	&[c, d] \ni y \mapsto \int_a^b f(x, y) \mspace{2mu} \dd \alpha(x) \in \mathbb{K}, \\
	&\sbra*{a, b} \ni x \mapsto \int_c^d f(x, y) \mspace{2mu} \dd \beta(y) \in \mathbb{K}
\end{align*}
are continuous by the uniform continuity of $f$.
Therefore, both the left-hand side and right-hand side of \eqref{eq: iterated RS integrals} make sense as Riemann--Stieltjes integrals.
The proof of Theorem~\ref{thm: Fubini thm for RS integrals of scalar-valued functions} mentioned in \cite{Hewitt_1960} relies on the Stone--Weierstrass theorem (e.g., see \cite[7.32 Theorem]{Rudin_1976}).

\subsection{A variant of Minkowski's integral inequality}

The following is a result on a variant of Minkowski's integral inequality.
See \cite[Exercise~16 of Chapter~8]{Rudin_1987} for a statement in the setting of measure theory.

\begin{theorem}\label{thm: Minkowski's integral ineq}
Let $\sbra*{a, b}$, $[c, d]$ be closed and bounded intervals of $\mathbb{R}$, and $\alpha \colon \sbra*{a, b} \to \mathbb{R}$, $\beta \colon [c, d] \to \mathbb{R}$ be monotonically increasing functions.
If $f \colon \sbra*{a, b} \times [c, d] \to \mathbb{R}$ is a continuous function, then for any $p \in \rbra*{1, \infty}$,
\begin{equation}\label{eq: Minkowski's integral ineq}
	\left( \int_c^d \abs*{\int_a^b f(x, y) \mspace{2mu} \dd \alpha(x)}^p \mspace{2mu} \dd \beta(y) \right)^{1/p}
	\le \int_a^b \biggl( \int_c^d \abs*{f(x, y)}^p \mspace{2mu} \dd \beta(y) \biggr)^{1/p} \mspace{2mu} \dd \alpha(x)
\end{equation}
holds.
\end{theorem}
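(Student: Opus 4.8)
The plan is to raise the left-hand side to the power $p$ and run a Hölder-duality argument that parallels the computation already carried out in the proof of Lemma~\ref{lem: L^p norm of g_phi}, of which \eqref{eq: Minkowski's integral ineq} is the abstract version. Write $I$ for the left-hand side of \eqref{eq: Minkowski's integral ineq} and set $F(y) \coloneqq \int_a^b f(x, y) \mspace{2mu} \dd\alpha(x)$ for $y \in [c, d]$; by the remark following Theorem~\ref{thm: Fubini thm for RS integrals of scalar-valued functions}, the function $F$ is continuous. Since $\alpha$ is monotonically increasing, we have $V_\alpha = \alpha - \alpha(a)$, so Lemma~\ref{lem: sharp estimate of RS integral} provides the pointwise bound $\abs*{F(y)} \le \int_a^b \abs*{f(x, y)} \mspace{2mu} \dd\alpha(x)$.

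First I would split $\abs*{F(y)}^p = \abs*{F(y)}^{p - 1}\abs*{F(y)}$ and insert the above bound on the second factor to get
\begin{equation*}
	I^p \le \int_c^d \abs*{F(y)}^{p - 1} \rbra*{ \int_a^b \abs*{f(x, y)} \mspace{2mu} \dd\alpha(x) } \mspace{2mu} \dd\beta(y).
\end{equation*}
The integrand $\abs*{F(y)}^{p - 1}\abs*{f(x, y)}$ is a continuous function of $(x, y)$ on $\sbra*{a, b} \times [c, d]$, being a product of the continuous functions $\abs*{F(\argdot)}^{p - 1}$ and $\abs*{f}$, so Theorem~\ref{thm: Fubini thm for RS integrals of scalar-valued functions} applies and lets me interchange the order of integration, yielding
\begin{equation*}
	I^p \le \int_a^b \rbra*{ \int_c^d \abs*{F(y)}^{p - 1}\abs*{f(x, y)} \mspace{2mu} \dd\beta(y) } \mspace{2mu} \dd\alpha(x).
\end{equation*}

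Next I would apply Hölder's inequality for Riemann--Stieltjes integrals against the increasing integrator $\beta$ to the inner integral, with conjugate exponents $p$ and $q = p/(p - 1)$. Since $(p - 1)q = p$, the factor coming from $F$ collapses to $\rbra*{ \int_c^d \abs*{F(y)}^p \mspace{2mu} \dd\beta(y) }^{1/q} = I^{p/q}$, which is independent of $x$ and therefore factors out of the outer integral:
\begin{equation*}
	I^p \le I^{p/q} \int_a^b \rbra*{ \int_c^d \abs*{f(x, y)}^p \mspace{2mu} \dd\beta(y) }^{1/p} \mspace{2mu} \dd\alpha(x).
\end{equation*}
If $I = 0$ the conclusion is immediate; otherwise $I$ is finite and positive, since $f$ is bounded on the compact set $\sbra*{a, b} \times [c, d]$ and $\alpha, \beta$ are of bounded variation, so dividing by $I^{p/q}$ and using $p - p/q = 1$ gives exactly \eqref{eq: Minkowski's integral ineq}.

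The main obstacle to watch is the rigorous justification of the interchange and the Hölder step rather than the algebra. For the former one must confirm that $\abs*{F(y)}^{p - 1}\abs*{f(x, y)}$ is genuinely jointly continuous so that Theorem~\ref{thm: Fubini thm for RS integrals of scalar-valued functions} is applicable, which reduces to the continuity of $F$ noted above; for the latter one needs Hölder's inequality to be available for Riemann--Stieltjes integrals of continuous functions against a monotone integrator, which follows by interpreting such integrals as Lebesgue--Stieltjes integrals with respect to the measure induced by $\beta$, where the inequality is classical.
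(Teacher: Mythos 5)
Your proof is correct and follows essentially the same route as the paper's: writing $I^p$ with the factor $\abs*{F(y)}^{p-1}$ (the paper's $M(y)$), bounding the remaining factor by $\int_a^b \abs*{f(x,y)} \, \dd\alpha(x)$, interchanging the iterated Riemann--Stieltjes integrals via Theorem~\ref{thm: Fubini thm for RS integrals of scalar-valued functions}, and closing with H\"older's inequality to extract $I^{p/q}$. The only cosmetic difference is that you cite Lemma~\ref{lem: sharp estimate of RS integral} for the pointwise bound where the paper uses the monotonicity of $\alpha$ directly.
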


We note that the statement in the above theorem also holds when $p = 1$.

\begin{proof}[A proof of Theorem~\ref{thm: Minkowski's integral ineq}]
Let $I$ be the left-hand side of \eqref{eq: Minkowski's integral ineq}.
When $I = 0$, the inequality~\eqref{eq: Minkowski's integral ineq} is trivial.
Therefore, we may assume $I \ne 0$.
Let
\begin{equation*}
	M(y) \coloneqq \abs*{\int_a^b f(x, y) \mspace{2mu} \dd \alpha(x)}^{p - 1}
\end{equation*}
for each $y \in [c, d]$.
Then we have
\begin{align*}
	I^p
	&= \int_c^d \abs*{\int_a^b f(x, y) \mspace{2mu} \dd \alpha(x)} M(y) \mspace{2mu} \dd \beta(y) \\
	&\le \int_c^d \biggl( \int_a^b \abs*{f(x, y)} \mspace{2mu} \dd \alpha(x) \biggr) M(y) \mspace{2mu} \dd \beta(y) \\
	&= \int_a^b \biggl( \int_c^d \abs*{f(x, y)}M(y) \mspace{2mu} \dd \beta(y) \biggr) \mspace{2mu} \dd \alpha(x)
\end{align*}
by applying Theorem~\ref{thm: Fubini thm for RS integrals of scalar-valued functions}, where the monotonically increasing property of $\alpha, \beta$ and the continuity of the function
\begin{equation*}
	\sbra*{a, b} \times [c, d] \ni (x, y) \mapsto \abs*{f(x, y)}M(y) \in \mathbb{R}
\end{equation*}
are used.
By applying H\"{o}lder's inequality for Riemann--Stieltjes integrals,
\begin{equation*}
	\int_c^d \abs*{f(x, y)}M(y) \mspace{2mu} \dd \beta(y)
	\le \biggl( \int_c^d \abs*{f(x, y)}^p \mspace{2mu} \dd \beta(y) \biggr)^{1/p} \cdot \biggl( \int_c^d M(y)^q \mspace{2mu} \dd \beta(y) \biggr)^{1/q}
\end{equation*}
holds, where $q \in \rbra*{1, \infty}$ is the exponent conjugate to $p$.
Since $q = p/(p - 1)$, the constant in the right-hand side is calculated as
\begin{equation*}
	\biggl( \int_c^d M(y)^q \mspace{2mu} \dd \beta(y) \biggr)^{1/q}
	= \biggl( \int_c^d \abs*{\int_a^b f(x, y) \mspace{2mu} \dd \alpha(x)}^p \mspace{2mu} \dd \beta(y) \biggr)^{1/q}
	= I^{p/q}.
\end{equation*}
Thus, the above argument shows
\begin{equation*}
	I^p
	\le I^{p/q} \int_a^b \biggl( \int_c^d \abs*{f(x, y)}^p \mspace{2mu} \dd \beta(y) \biggr)^{1/p} \mspace{2mu} \dd \alpha(x),
\end{equation*}
which yields \eqref{eq: Minkowski's integral ineq}.
\end{proof}

\section*{Acknowledgements}
\addcontentsline{toc}{section}{Acknowledgements}

This work was supported by JSPS Grant-in-Aid for Young Scientists Grant Number JP23K12994.

\section*{Conflict of Interest}

The author states that there is no conflict of interest.

\section*{Data Availability}
Data sharing not applicable to this article as no datasets were generated or analyzed during the current study.

\end{document}